\definecolor{red}{RGB}{255,25,25}
\definecolor{blue}{RGB}{25,50,200}
\newtheorem{theorem}{Theorem}[section]
\crefname{theorem}{Theorem}{Theorems}
\newtheorem{lemma}[theorem]{Lemma}
\crefname{lemma}{Lemma}{Lemmas}
\crefname{proposition}{Proposition}{Propositions}
\crefname{prop}{Proposition}{Propositions}
\crefname{corollary}{Corollary}{Corollaries}
\crefname{cor}{Corollary}{Corollaries}
\crefname{conjecture}{Conjecture}{Conjectures}
\crefname{conj}{Conjecture}{Conjectures}
\newtheorem*{conj*}{Conjecture}
\crefname{conj}{Conjecture}{Conjectures}
\theoremstyle{definition}
\newtheorem{definition}[theorem]{Definition}
\crefname{definition}{Definition}{Definitions}
\crefname{defn}{Definition}{Definitions}
\crefname{example}{Example}{Examples}
\crefname{notation}{Notation}{Notation}
\newtheorem*{notation*}{Notation}
\crefname{notation}{Notation}{Notation}
\crefname{problem}{Problem}{Problems}
\newtheorem{question}[theorem]{Question}
\crefname{question}{Question}{Questions}
\crefname{condition}{Condition}{Conditions}
\crefname{assumption}{Assumption}{Assumptions}
\theoremstyle{remark}
\crefname{rmk}{Remark}{Remarks}
\newtheorem*{rmk*}{Remark}
\crefname{rmk}{Remark}{Remarks}
\newtheorem{remark}[theorem]{Remark}
\crefname{remark}{Remark}{Remarks}
\crefname{fact}{Fact}{Facts}
\newtheorem{claim}[theorem]{Claim}
\crefname{claim}{Claim}{Claims}
\newtheorem*{claim*}{Claim}
\crefname{claim}{Claim}{Claims}
\crefname{step}{Step}{Steps}
\newtheorem{case}{Case}
\crefname{case}{Case}{Cases}
\numberwithin{equation}{section}
\newcommand{\what}[1]{\widehat{#1}}
\newcommand{\ol}[1]{\overline{#1}}
\newcommand{\longinjmap}{\lhook\joinrel\longrightarrow}
\newcommand{\lra}{\longrightarrow}
\newcommand{\arxiv}[1]{\href{https://arxiv.org/abs/#1}{{\tt arXiv:#1}}}
\def\MR#1{\href{http://www.ams.org/mathscinet-getitem?mr=#1}{MR#1}}
\newcommand{\bA}{\mathbf{A}}
\newcommand{\bB}{\mathbf{B}}
\newcommand{\bC}{\mathbf{C}}
\newcommand{\bF}{\mathbf{F}}
\newcommand{\bH}{\mathbf{H}}
\newcommand{\bI}{\mathbf{I}}
\newcommand{\bM}{\mathbf{M}}
\newcommand{\bQ}{\mathbf{Q}}
\newcommand{\bR}{\mathbf{R}}
\newcommand{\bZ}{\mathbf{Z}}
\newcommand{\be}{\mathbf{e}}
\newcommand{\bk}{\mathbf{k}}
\newcommand{\bu}{\boldsymbol{u}}
\newcommand{\bv}{\boldsymbol{v}}
\newcommand{\sH}{\mathscr{H}}
\newcommand{\sL}{\mathscr{L}}
\newcommand{\sT}{\mathsf{T}}
\newcommand{\alg}{{\rm alg}}
\newcommand{\reduced}{{\rm red}}
\newcommand{\Alb}{\operatorname{Alb}}
\newcommand{\Aut}{\operatorname{Aut}}
\newcommand{\ch}{\operatorname{char}}
\newcommand{\CH}{\operatorname{CH}}
\newcommand{\End}{\operatorname{End}}
\newcommand{\et}{{\textrm{\'et}}}
\newcommand{\Hom}{\operatorname{Hom}}
\newcommand{\id}{\operatorname{id}}
\newcommand{\im}{\operatorname{Im}}
\newcommand{\isom}{\simeq}
\newcommand{\Mat}{\operatorname{M}}
\newcommand{\Nm}{\operatorname{N}}
\newcommand{\NS}{\operatorname{NS}}
\newcommand{\num}{\equiv}
\newcommand{\Pic}{\operatorname{Pic}}
\newcommand{\re}{\operatorname{Re}}
\newcommand{\vect}{\operatorname{vec}}
\begin{document}

\title[Cohomological and numerical dynamical degrees]{Cohomological and numerical dynamical degrees on abelian varieties}

\author{Fei Hu}
\address{Department of Mathematics, University of British Columbia, 1984 Mathematics Road, Vancouver, BC V6T 1Z2, Canada
\endgraf Pacific Institute for the Mathematical Sciences, 2207 Main Mall, Vancouver, BC V6T 1Z4, Canada}
\email{\href{mailto:hf@u.nus.edu}{\tt hf@u.nus.edu}}
\urladdr{\url{https://sites.google.com/view/feihu90s/}}

\begin{abstract}
We show that for a self-morphism of an abelian variety defined over an algebraically closed field of arbitrary characteristic, the second cohomological dynamical degree coincides with the first numerical dynamical degree.
\end{abstract}

\subjclass[2010]{
14G17,	
14K05,	
16K20.	
}


\keywords{dynamical degree, abelian variety, endomorphism algebra, \'etale cohomology, algebraic cycle, positive characteristic}

\thanks{The author was partially supported by a UBC-PIMS Postdoctoral Fellowship.}

\maketitle



\section{Introduction}
\label{section-intro}


\noindent
Let $X$ be a smooth projective variety defined over an algebraically closed field $\bk$, and $f$ a surjective morphism of $X$ to itself.
Inspired by Esnault--Srinivas \cite{ES13} and Truong \cite{Truong1611}, we associate to this map two dynamical degrees as follows.
Let $\ell$ be a prime different from the characteristic of $\bk$.
As a consequence of Deligne \cite{Deligne74} and Katz--Messing \cite{KM74}, the characteristic polynomial of $f$ on the $\ell$-adic \'etale cohomology group $H^i_{\et}(X, \bQ_\ell)$ is independent of $\ell$, and has integer coefficients, and algebraic integer roots
(cf.~\cite[Proposition~2.3]{ES13}; see also \cite{Kleiman68}).
The {\it $i$-{th} cohomological dynamical degree} $\chi_i(f)$ of $f$ is then defined as the spectral radius of the pullback action $f^*$ on $H^i_{\et}(X, \bQ_\ell)$, i.e.,
\[
\chi_i(f) = \rho\big(f^* \big|_{H^i_{\et}(X, \bQ_\ell)}\big).
\]
Alternatively, one can also define dynamical degrees using algebraic cycles.
Indeed, let $N^k(X)$ denote the group of algebraic cycles of codimension $k$ modulo numerical equivalence.
Note that $N^k(X)$ is a finitely generated free abelian group (cf.~\cite[Theorem~3.5]{Kleiman68}), and hence the characteristic polynomial of $f$ on $N^k(X)$ has integer coefficients and algebraic integer roots.
We define the {\it $k$-{th} numerical dynamical degree} $\lambda_k(f)$ of $f$ as the spectral radius of the pullback action $f^*$ on $N^k(X)_\bR \coloneqq N^k(X) \otimes_\bZ \bR$, i.e.,
\[
\lambda_k(f) = \rho\big(f^* \big|_{N^k(X)_\bR}\big).
\]

When $\bk \subseteq \bC$, we may associate to $(X, f)$ a projective (and hence compact K\"ahler) manifold $X_\bC$ and a surjective holomorphic map $f_\bC$. Then by the comparison theorem and Hodge theory, it is not hard to show that $\chi_{2k}(f) = \lambda_k(f)$; both of them also agree with the usual dynamical degree defined by the Dolbeault cohomology group $H^{k,k}(X_\bC, \bC)$ in the context of complex dynamics (see e.g. \cite[\S4]{DS17}).

For an arbitrary algebraically closed field $\bk$ (in particular, of positive characteristic), Esnault and Srinivas \cite{ES13} proved that for an automorphism of a smooth projective surface, the second cohomological dynamical degree coincides with the first numerical dynamical degree.
Their proof relies on the Enriques--Bombieri--Mumford classification of surfaces in arbitrary characteristic.
In general, Truong \cite{Truong1611} raised the following question (among many others).


\begin{question}[{cf.~\cite[Question~2]{Truong1611}}]
\label{qn:est}
Let $X$ be a smooth projective variety defined over an algebraically closed field $\bk$, and $f$ a surjective morphism of $X$ to itself.
Then is $\chi_{2k}(f) = \lambda_k(f)$ for any $1\le k\le \dim X$?
\end{question}

The above question turns out to be related to Weil's Riemann hypothesis (proved by Deligne in the early 1970s).
More precisely, when $X_0$ is a smooth projective variety defined over a finite field $\bF_q$, we let $X$ denote the base change of $X_0$ to the algebraic closure $\ol \bF_q$ of $\bF_q$
and let $F$ denote the Frobenius endomorphism of $X$ (with respect to $\bF_q$).
Then Deligne's celebrated theorem asserts that all eigenvalues of $F^*|_{H^i_{\et}(X, \bQ_\ell)}$ are algebraic integers of modulus $q^{i/2}$ (cf.~\cite[Th\'eor\`eme~1.6]{Deligne74}).
In particular, we have $\chi_i(F) = q^{i/2}$.
On the other hand, the $k$-{th} numerical dynamical degree $\lambda_k(F)$ of $F$ is equal to $q^k$.
See \cite[\S4]{Truong1611} for more details.

Truong proved in \cite{Truong1611} a slightly weaker statement that
\[
h_{\et}(f) \coloneqq \max_{i} \log \chi_i(f) = \max_{k} \log \lambda_k(f) \eqqcolon h_{\alg}(f),
\]
which is enough to conclude that the (\'etale) entropy $h_{\et}(f)$ coincides with the algebraic entropy $h_{\alg}(f)$ in the sense of \cite[\S6.3]{ES13}.
As a consequence, the spectral radius of the action $f^*$ on the even degree \'etale cohomology $H^{2\bullet}_{\et}(X, \bQ_\ell)$ is the same as the spectral radius of $f^*$ on the total cohomology $H^{\bullet}_{\et}(X, \bQ_\ell)$.\footnote{Recently, this was reproved by Shuddhodan \cite{Shuddhodan19} using a number-theoretic method, where the author introduced a zeta function $Z(X,f,t)$ for a dynamical system $(X, f)$ defined over a finite field.}
Note that when $\bk \subseteq \bC$, by the fundamental work of Gromov \cite{Gromov03} and Yomdin \cite{Yomdin87}, the algebraic entropy is also equal to the topological entropy $h_{\textrm{top}}(f_\bC)$ of the topological dynamical system $(X_\bC, f_\bC)$; see \cite[\S4]{DS17} for more details.

In this article, we give an affirmative answer to \cref{qn:est} in the case that $X$ is an abelian variety and $k=1$.

\begin{theorem}
\label{thmA}
Let $X$ be an abelian variety defined over an algebraically closed field $\bk$, and $f$ a surjective self-morphism of $X$.
Then $\chi_2(f) = \lambda_1(f)$.
\end{theorem}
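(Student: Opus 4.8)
The plan is to translate both dynamical degrees into invariants of the action of $f$ on the endomorphism algebra of $X$. Let $g = \dim X$ and fix a polarization. For an abelian variety, $H^1_{\et}(X,\bQ_\ell)$ is dual to the Tate module $V_\ell X$, which has dimension $2g$, and $f^*$ acts on it via (the transpose of) the action of $f$ on $V_\ell X$. Since $f$ is a surjective self-morphism, after translating we may assume $f$ is an isogeny, hence $f \in \End(X)$ becomes invertible in $\End^0(X) \coloneqq \End(X)\otimes_\bZ\bQ$; translations act trivially on both cohomology and on $N^\bullet(X)$, so this reduction is harmless. Write $\alpha_1,\dots,\alpha_{2g}$ for the eigenvalues of $f$ on $V_\ell X$ (equivalently, the roots of the characteristic polynomial $P_f(t)$ of $f$ acting on $V_\ell X$, which by Weil/Deligne theory has rational integer coefficients independent of $\ell$). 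Then $H^i_{\et}(X,\bQ_\ell) \cong \bigwedge^i H^1_{\et}(X,\bQ_\ell)$, so the eigenvalues of $f^*$ on $H^2$ are the products $\alpha_i\alpha_j$ with $i<j$, and therefore
\[
\chi_2(f) = \rho\big(\textstyle\bigwedge^2 (f^*|_{H^1})\big) = \max_{i<j} |\alpha_i \alpha_j|.
\]

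**Identifying $\lambda_1(f)$.** The key input here is that on an abelian variety, $N^1(X)_\bR$ can be described via the Néron–Severi group, and the Rosati involution: $N^1(X)\otimes\bQ$ sits inside the space of Rosati-symmetric elements of $\End^0(X)$ (using a polarization to identify a divisor class with a symmetric endomorphism $\phi_L^{-1}\circ\phi_{L'}$). Under this identification, the pullback action $f^*$ on $N^1(X)$ corresponds to the map $\psi \mapsto f^\dagger \circ \psi \circ f$, where $f^\dagger$ is the Rosati adjoint of $f$. Thus $\lambda_1(f)$ is the spectral radius of $\psi \mapsto f^\dagger \psi f$ on the real vector space of symmetric elements. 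Because the Rosati involution is a positive involution, one expects $\lambda_1(f) = \rho(f^\dagger f)$ acting on... more precisely $\lambda_1(f)$ should equal the square of the spectral radius of $f$ acting on a suitable space — and the cleanest route is to compare with a second Tate-module/Hodge-type computation: the reduced cohomology $N^\bullet(X)_{\bQ_\ell}$ injects into $H^{2\bullet}_{\et}(X,\bQ_\ell)$ compatibly with $f^*$ (the cycle class map), so $\lambda_1(f) \le \chi_2(f)$ is immediate. The whole content is the reverse inequality.

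**The main step: proving $\chi_2(f)\le\lambda_1(f)$.** The eigenvalues $\alpha_i$ are algebraic integers; by Deligne's specialization/spreading-out argument (or directly via the theory of complex multiplication when $\bk = \ol\bF_q$, and via a Lefschetz-principle reduction to $\bk = \bC$ in characteristic $0$), each $\alpha_i$ lies in a CM field or a product thereof, and the collection $\{\alpha_i\}$ is stable under complex conjugation with $\alpha_i \bar\alpha_i$ a real number. The point is that $\chi_2(f) = \max_{i<j}|\alpha_i\alpha_j|$ is achieved (or bounded) by a pair of the form $\alpha,\bar\alpha$ lying in the same simple factor, so that $|\alpha\bar\alpha| = \alpha\bar\alpha$ appears as an eigenvalue of the symmetric endomorphism $f^\dagger f$ — hence as an eigenvalue of $f^*$ on $N^1(X)_\bR$. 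To make this precise I would decompose $\End^0(X) = \prod_s M_{n_s}(D_s)$ according to the isogeny decomposition $X \sim \prod X_s^{n_s}$ into simple factors, analyze the Rosati involution on each factor using Albert's classification of division algebras with positive involution, and on each factor bound the largest product $|\alpha_i\alpha_j|$ of two eigenvalues from $H^1(X_s)$ by an eigenvalue of $f^\dagger_s f_s$. The main obstacle is exactly this last estimate in the non-split cases — when $D_s$ is a quaternion algebra (Albert types II, III) or a CM field with $n_s > 1$ — where one must rule out the possibility that $\max|\alpha_i\alpha_j|$ strictly exceeds $\rho(f^\dagger f)$; I expect to handle this by exploiting positivity of the reduced trace pairing $\psi \mapsto \mathrm{Trd}(\psi \psi^\dagger) > 0$ together with the fact that the $\alpha_i$ with their multiplicities are governed by $\mathrm{Trd}_{D_s/\bQ}$, forcing the "off-diagonal" products to be dominated by the "diagonal" ones $\alpha_i\bar\alpha_i$.
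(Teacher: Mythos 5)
Your framework coincides with the paper's: reduce to an isogeny, identify $N^1(X)_\bQ$ with the Rosati-symmetric part of $\End^0(X)$ so that $f^*$ becomes $\psi\mapsto f^\dagger\circ\psi\circ f$, obtain $\lambda_1(f)\le\chi_2(f)$ from the cycle class map, and close the gap by showing that $|\omega_1|^2$ (with $\omega_1$ a root of $P_f(t)$ of maximal modulus) is realized as an eigenvalue of $f^*$ on $N^1(X)_\bR$. That plan is sound, and your remark that it suffices to do this on the single isogeny factor carrying $\omega_1$ (because $\chi_2(f)\le|\omega_1|^2$ holds automatically, the roots being stable under conjugation within each factor) is exactly how the paper's Lemma~3.8 evades the K\"unneth cross terms $H^1\otimes H^1$.

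However, the decisive step is missing, and the mechanism you propose for it does not work. You assert that $\omega_1\bar\omega_1$ ``appears as an eigenvalue of the symmetric endomorphism $f^\dagger f$ --- hence as an eigenvalue of $f^*$ on $N^1(X)_\bR$.'' Neither half is justified. First, the eigenvalues of the \emph{element} $f^\dagger f$ are not products of pairs of eigenvalues of $f$: already for $\End(X)_\bR\isom\Mat_r(\bR)$ with $f\mapsto\bA$ and $f^\dagger\mapsto\bA^\sT$, the eigenvalues of $\bA^\sT\bA$ are squared singular values, and $\rho(\bA^\sT\bA)$ strictly exceeds $\rho(\bA)^2$ for a non-trivial Jordan block. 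Second, eigenvalues of the element $f^\dagger f$ (on $T_\ell X$, say) are a different object from eigenvalues of the sandwich \emph{operator} $\psi\mapsto f^\dagger\psi f$ restricted to the symmetric part, and no bridge between them is offered. What actually works, and what the paper does case by case through Albert's classification, is to represent the sandwich operator on $\Mat_r(\bR)$, $\Mat_r(\bC)$, $\Mat_r(\bH)$ as the Kronecker product $\bA\otimes\bar\bA$, whose eigenvalues are the $\pi_i\bar\pi_j$, and then to exhibit for the maximal-modulus ``diagonal'' eigenvalue $|\pi_1|^2$ an explicitly symmetrized eigenvector (real symmetric, Hermitian, or quaternionic-Hermitian), so that it survives restriction to $\NS(X)_\bR$. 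You would also need the link between the eigenvalues $\pi_i$ of the matrix image of $f$ in $\End(X)_\bR$ and the roots $\omega_i$ of $P_f(t)$ on $H^1_{\et}$: these agree only up to multiplicity, via Mumford's norm-form lemma giving $P_f=(\chi_f^{\reduced})^m$, which your sketch never invokes. Finally, the appeals to CM fields and to specialization are unnecessary and, for a general isogeny (e.g.\ Type~I, totally real $D$), unavailable, and ``positivity of the reduced trace pairing'' is not developed into an argument at the one place where you yourself locate the real difficulty.
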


\begin{remark}
\label{rmkA}
\begin{enumerate}[(1)]
\item When $f$ is an automorphism of an abelian surface $X$, the theorem was already known by Esnault and Srinivas (cf.~\cite[\S4]{ES13}).
Even in this two dimensional case, their proof is quite involved.
Actually, after a standard specialization argument, they applied the celebrated Tate theorem \cite{Tate66} (see also \cite[Appendix~I, Theorem~3]{Mumford}),
which asserts that the minimal polynomial of the geometric Frobenius endomorphism is a product of distinct monic irreducible polynomials.
Then they had four cases to analyze according to its irreducibility and degree.
Our proof is more explicit in the sense that we will eventually determine all eigenvalues of $f^*|_{N^1(X)_\bR}$.
\item Because of the lack of an explicit characterization of higher-codimensional cycles (up to numerical equivalence) like the N\'eron--Severi group $\NS(X)$ sitting inside the endomorphism algebra $\End^0(X)$, it would be very interesting to consider the case $k\ge 2$ next.
\end{enumerate}
\end{remark}


\section{Preliminaries on abelian varieties}
\label{section-prelim}


\noindent
We refer to \cite{Mumford} and \cite{Milne86} for standard notation and terminologies on abelian varieties.

\begin{notation*}
\label{notation}
The following notation remains in force throughout the rest of this article unless otherwise stated.
\renewcommand*{\arraystretch}{1.1}
\begin{longtable}{p{2cm} p{12cm}}
$\bk$ & an algebraically closed field of arbitrary characteristic \\
$\ell$ & a prime different from $\ch\bk$ \\
$X$ & an abelian variety of dimension $g$ defined over $\bk$ \\
$\what X$ & the dual abelian variety $\Pic^0(X)$ of $X$ \\
$\alpha, \  \psi$ & endomorphisms of $X$ \\
$\what \alpha, \  \what \psi$ & the induced dual endomorphisms of $\what X$ \\
$\End(X)$ & the endomorphism ring of $X$ \\
$\End^0(X)$ & $\End(X) \otimes_\bZ \bQ$, the endomorphism $\bQ$-algebra of $X$ \\
$\End(X)_\bR$ & $\End(X) \otimes_\bZ \bR = \End^0(X) \otimes_\bQ \bR$, the endomorphism $\bR$-algebra of $X$ \\
$\Mat_n(R)$ & the ring of all $n\times n$ matrices with entries in a ring $R$ \\
$\phi_\sL$ & the induced homomorphism of a line bundle $\sL$ on $X$: \\
 & \quad \quad $\phi_\sL \colon X \lra \what X, \ \ x \longmapsto t_x^*\sL \otimes \sL^{-1}$ \\
$\phi = \phi_{\sL_0}$ & a fixed polarization of $X$ induced from some ample line bundle $\sL_0$ \\
$^\dagger$ & the Rosati involution on $\End^0(X)$ defined in the following way: \\
 & \quad \quad $\psi \longmapsto \psi^\dagger \coloneqq \phi^{-1}\circ \what \psi \circ \phi$, for any $\psi \in \End^0(X)$ \\
$\NS(X)$ & $\Pic(X)/\Pic^0(X)$, the N\'eron--Severi group of $X$ \\
$\NS^0(X)$ & $\NS(X) \otimes_\bZ \bQ = N^1(X)_\bQ = \NS(X)_\bQ$ (see \cref{rmk:div-equiv-relation}) \\
$\NS(X)_\bR$ & $\NS(X) \otimes_\bZ \bR = \NS^0(X) \otimes_\bQ \bR = N^1(X)_\bR$ \\ 
$N^k(X)_\bR$ & $N^k(X) \otimes_\bZ \bR$, the $\bR$-vector space of numerical equivalent classes of \\
 & codimension-$k$ cycles (with $0\le k\le g = \dim X$) \\
$H^i_{\et}(X, \bQ_\ell)$ & $H^i_{\et}(X, \bZ_\ell) \otimes_{\bZ_\ell} \bQ_\ell$, the $\ell$-adic \'etale cohomology group of degree $i$ \\
$T_\ell X$ & the Tate module $\varprojlim_n X_{\ell^n}(\bk)$ of $X$, a free $\bZ_\ell$-module of rank $2g$ \\
$T_\ell \alpha$ & the induced endomorphism on $T_\ell X$ \\
$A$ & a simple abelian variety defined over $\bk$ \\
$D$ & $\End^0(A)$, the endomorphism $\bQ$-algebra of $A$ \\
$K$ & the center of the division ring $D = \End^0(A)$ \\
$K_0$ & the maximal totally real subfield of $K$ \\
$\bH$ & the standard quaternion algebra over $\bR$
\end{longtable}
\end{notation*}

For the convenience of the reader, we include several important structure theorems on the \'etale cohomology groups, the endomorphism algebras and the N\'eron--Severi groups of abelian varieties.
We refer to \cite[\S19-21]{Mumford} for more details.

First, the \'etale cohomology groups of abelian varieties are simple to describe.

\begin{theorem}[{cf.~\cite[Theorem~15.1]{Milne86}}]
\label{thm:etale-coh}
Let $X$ be an abelian variety of dimension $g$ defined over $\bk$, and let $\ell$ be a prime different from $\ch\bk$. Let $T_\ell X \coloneqq \varprojlim_n X_{\ell^n}(\bk)$ be the Tate module of $X$, which is a free $\bZ_\ell$-module of rank $2g$.
\begin{itemize}
\item[(a)] There is a canonical isomorphism
\[
H^1_{\emph\et}(X, \bZ_\ell) \isom \Hom_{\bZ_\ell}(T_\ell X, \bZ_\ell).
\]
\item[(b)] The cup-product pairing induces isomorphisms
\[
\bigwedge\nolimits^i H^1_{\emph\et}(X, \bZ_\ell) \isom H^i_{\emph\et}(X, \bZ_\ell),
\]
for all $i$. In particular, $H^i_{\emph\et}(X, \bZ_\ell)$ is a free $\bZ_\ell$-module of rank $\displaystyle \binom{2g}{i}$.
\end{itemize}
\end{theorem}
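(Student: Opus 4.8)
The plan is to prove the two parts by the following classical route: part~(a) by identifying the \'etale fundamental group of $X$, and part~(b) by exploiting the Hopf algebra structure that the group law of $X$ puts on its \'etale cohomology, together with a short dimension count. I would first establish everything rationally and then upgrade to $\bZ_\ell$-coefficients.

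For (a), recall that any connected finite \'etale cover of $X$ becomes, after choosing a point above $0\in X$ and applying the rigidity lemma, an isogeny of abelian varieties, and that every isogeny of $X$ is dominated by some multiplication map $[n]\colon X\to X$; hence $\pi_1^{\et}(X,0)\isom\varprojlim_n X_n(\bk)$ (the limit over the $n$-torsion subgroups, ordered by divisibility), whose pro-$\ell$ part is $T_\ell X$. Since $\bk$ is algebraically closed and $\bZ/\ell^n\bZ$ is a constant sheaf, $H^1_{\et}(X,\bZ/\ell^n\bZ)=\Hom_{\mathrm{cont}}(\pi_1^{\et}(X,0),\bZ/\ell^n\bZ)=\Hom_{\bZ_\ell}(T_\ell X,\bZ/\ell^n\bZ)$, the non-$\ell$ part of $\pi_1^{\et}(X,0)$ carrying no nontrivial continuous homomorphism to a finite $\ell$-group. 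Passing to the inverse limit over $n$ (the transition maps are surjective, so $\varprojlim^1$ vanishes, and $T_\ell X$ is finitely generated) gives the canonical isomorphism $H^1_{\et}(X,\bZ_\ell)\isom\Hom_{\bZ_\ell}(T_\ell X,\bZ_\ell)$ of~(a).

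For (b), set $A^\bullet\coloneqq H^\bullet_{\et}(X,\bQ_\ell)$, a connected, finite-dimensional, graded-commutative $\bQ_\ell$-algebra ($A^0=\bQ_\ell$ as $X$ is connected, and $A^i=0$ for $i>2g$). The K\"unneth isomorphism $H^\bullet_{\et}(X\times X,\bQ_\ell)\isom A^\bullet\otimes_{\bQ_\ell}A^\bullet$ turns the multiplication $m\colon X\times X\to X$ into a comultiplication, the identity section into a counit, and the inverse morphism into an antipode; the Hopf axioms are precisely the group axioms of $X$. By Leray's structure theorem for connected graded-commutative Hopf algebras in characteristic zero, $A^\bullet$ is free graded-commutative on its indecomposables, and finite-dimensionality forces it to be an exterior algebra $\bigwedge^\bullet_{\bQ_\ell}V$ on a graded vector space $V$ concentrated in odd degrees, with $V^1=A^1=H^1_{\et}(X,\bQ_\ell)$ of dimension $2g$ by~(a). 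To see $V^j=0$ for odd $j\ge 3$: were $j$ minimal with $V^j\ne 0$, then $3\le j\le 2g-1$ (since $V^j\subseteq A^j=0$ for $j>2g$), and in degree $2g$ the exterior algebra $\bigwedge V$ contains the two independent subspaces $\bigwedge^{2g}V^1$ and $\bigwedge^{2g-j}V^1\wedge V^j$, whence $\dim H^{2g}_{\et}(X,\bQ_\ell)\ge 1+\binom{2g}{j}\dim V^j>1$, contradicting $\dim H^{2g}_{\et}(X,\bQ_\ell)=1$ (Poincar\'e duality and connectedness). Therefore $A^\bullet=\bigwedge^\bullet_{\bQ_\ell}H^1_{\et}(X,\bQ_\ell)$ with cup product as multiplication, which is~(b) rationally and gives $\dim_{\bQ_\ell}H^i_{\et}(X,\bQ_\ell)=\binom{2g}{i}$.

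To pass to $\bZ_\ell$-coefficients one needs the torsion-freeness of $H^i_{\et}(X,\bZ_\ell)$; granting it, $H^i_{\et}(X,\bZ_\ell)$ is free of rank $\binom{2g}{i}$, and the cup-product map $\bigwedge^i H^1_{\et}(X,\bZ_\ell)\to H^i_{\et}(X,\bZ_\ell)$ is an injection of free $\bZ_\ell$-modules of equal rank, an isomorphism after inverting $\ell$, and compatible (up to sign) with the perfect Poincar\'e pairings on source and target, hence an isomorphism. I expect the main obstacle to be the heart of part~(b)---invoking the characteristic-zero structure theory for Hopf algebras and, via the degree-$2g$ count, ruling out cohomology not coming from $H^1$---with the torsion-freeness needed for the integral refinement a secondary nuisance. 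A technically different route that handles both at once is to lift $X$ to an abelian scheme over a mixed-characteristic base, apply smooth and proper base change for the sheaves $\bZ/\ell^n\bZ$ to reduce to an abelian variety over $\bC$, and there combine the classical isomorphism $\bigwedge^i H^1_{\mathrm{sing}}(X(\bC),\bZ)\isom H^i_{\mathrm{sing}}(X(\bC),\bZ)$ with Artin's comparison theorem; this delivers the full integral statement directly, at the cost of using liftability of abelian varieties and a spreading-out argument.
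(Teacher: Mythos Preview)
The paper does not prove this theorem at all: it is quoted as a preliminary result with the citation ``cf.~\cite[Theorem~15.1]{Milne86}'' and no argument is given. So there is nothing in the paper to compare your proposal against beyond the reference itself.

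That said, your sketch is correct and is essentially the classical proof one finds in Milne's exposition: part~(a) via $\pi_1^{\et}(X,0)\cong\varprojlim_n X_n(\bk)$ and part~(b) via the Hopf-algebra structure theorem over $\bQ_\ell$ followed by an integral upgrade. Two small remarks on the integral step. First, your Poincar\'e-duality argument for showing that the injection $\bigwedge^i H^1_{\et}(X,\bZ_\ell)\hookrightarrow H^i_{\et}(X,\bZ_\ell)$ is an isomorphism implicitly uses that the top-degree map $\bigwedge^{2g} H^1_{\et}(X,\bZ_\ell)\to H^{2g}_{\et}(X,\bZ_\ell)$ is already an isomorphism of rank-one modules; you should either handle that case separately or phrase the duality argument so that the degrees $i$ and $2g-i$ are treated symmetrically (pairing with $i=0$ gives the case $i=2g$ for free, since $f_0$ is the identity on $\bZ_\ell$). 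Second, the torsion-freeness you flag as ``granted'' is indeed the crux of the integral refinement; the lifting-to-characteristic-zero route you outline at the end is one clean way to get it, and is the path Milne indicates.
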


Furthermore, the functor $T_\ell$ induces an $\ell$-adic representation of the endomorphism algebra. In general, we have:

\begin{theorem}[{cf.~\cite[\S19, Theorem~3]{Mumford}}]
\label{thm:l-adic-rep}
For any two abelian varieties $X$ and $Y$, the group $\Hom(X,Y)$ of homomorphisms of $X$ into $Y$ is a finitely generated free abelian group, and the natural homomorphism of $\bZ_\ell$-modules
\[
\Hom(X, Y) \otimes_\bZ \bZ_\ell \lra \Hom_{\bZ_\ell}(T_\ell X, T_\ell Y)
\]
induced by $T_\ell \colon \Hom(X, Y) \lra \Hom_{\bZ_\ell}(T_\ell X, T_\ell Y)$ is injective.
\end{theorem}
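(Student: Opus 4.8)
The plan is to establish the two assertions together, first reducing to a single abelian variety. Since freeness means torsion-free plus finitely generated, start with torsion-freeness: if $n\alpha = 0$ for some $\alpha \in \Hom(X,Y)$ and integer $n \neq 0$, then $\alpha \circ [n]_X = 0$, and as $[n]_X$ is a surjective isogeny we conclude $\alpha = 0$. For the remaining two points, note that $\Hom(X,Y)$ is a $\bZ$-module direct summand of $\End(X \times Y)$, compatibly via $T_\ell$ with the splitting $T_\ell(X\times Y) = T_\ell X \oplus T_\ell Y$; it therefore suffices to show, for an arbitrary abelian variety $X$ of dimension $g$, that $\End(X)$ is finitely generated and that $\End(X) \otimes_\bZ \bZ_\ell \to \End_{\bZ_\ell}(T_\ell X)$ is injective.

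The technical core is the divisibility criterion: for $\alpha \in \End(X)$ and $n \geq 0$,
\[
T_\ell\alpha(T_\ell X) \subseteq \ell^n\, T_\ell X \iff X_{\ell^n} \subseteq \Ker\alpha \iff \alpha \in \ell^n\End(X).
\]
The first equivalence uses the canonical isomorphism $T_\ell X/\ell^n T_\ell X \isom X_{\ell^n}$, under which the reduction of $T_\ell\alpha$ modulo $\ell^n$ becomes the map induced by $\alpha$ on $\ell^n$-torsion; the second uses that $[\ell^n]_X$ exhibits $X$ as the quotient $X/X_{\ell^n}$ (here $\ell \neq \ch\bk$ is essential), so $X_{\ell^n} \subseteq \Ker\alpha$ forces $\alpha = \beta\circ[\ell^n]_X = \ell^n\beta$ for some $\beta \in \End(X)$. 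Two consequences are recorded: $\Ker\big(T_\ell\colon\End(X)\to\End_{\bZ_\ell}(T_\ell X)\big) = \bigcap_n \ell^n\End(X)$, and $\End(X) \cap \ell^n\End_{\bZ_\ell}(T_\ell X) = \ell^n\End(X)$ inside $\End_{\bZ_\ell}(T_\ell X)$. From the former one gets that $T_\ell$ is already injective on the group $\End(X)$: if $0 \neq \alpha = \ell^n\beta_n$ for all $n$, put $Z = \alpha(X)$ (of dimension $\geq 1$) and $W = (\Ker\alpha)^0$; then each $\beta_n$ maps onto $Z$ and kills $W$, since $\beta_n(W)$ is an abelian subvariety annihilated by $\ell^n$, hence trivial. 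The induced isogenies $\bar\alpha, \bar\beta_n\colon X/W \to Z$ satisfy $\bar\alpha = \ell^n\bar\beta_n$, so $\deg\bar\alpha = \ell^{2n\dim Z}\deg\bar\beta_n \to \infty$, contradicting the fact that $\deg\bar\alpha$ is a fixed positive integer.

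The main obstacle is finite generation of $\End(X)$: this is genuinely global and invisible to a single prime $\ell$, since all the $\ell$-adic properties above hold, for instance, for $\bZ_{(\ell)} \injmap \bZ_\ell$ in place of $\End(X) \injmap \End_{\bZ_\ell}(T_\ell X)$. The extra input I would bring in is the polynomiality of the degree function (Mumford, \S19): $t \mapsto \deg([t]_X - \alpha)$ agrees with a monic polynomial $P_\alpha(t) \in \bZ[t]$ of degree $2g$ that equals $\det(t\cdot\id - T_\ell\alpha)$. By Cayley–Hamilton $P_\alpha(T_\ell\alpha) = 0$, hence $T_\ell\big(P_\alpha(\alpha)\big) = 0$, and the injectivity just proved yields $P_\alpha(\alpha) = 0$; so every element of $\End(X)$ is integral over $\bZ$. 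By Poincaré's complete reducibility theorem $\End^0(X) \isom \prod_i \Mat_{n_i}(D_i)$ is a finite-dimensional semisimple $\bQ$-algebra, so its reduced trace form is non-degenerate and, by integrality, takes values in $\bZ$ on $\End(X)$; picking a $\bQ$-basis of $\End^0(X)$ contained in $\End(X)$, we see that $\End(X)$ lies in the dual lattice of that basis, a finitely generated $\bZ$-module. Hence $\End(X)$ is finitely generated, and being torsion-free it is free.

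Injectivity after $\otimes_\bZ \bZ_\ell$ is then formal: given a $\bZ$-basis $e_1, \dots, e_r$ of $\End(X)$, a relation $\sum_i c_i\, T_\ell e_i = 0$ with $c_i \in \bZ_\ell$ not all in $\ell\bZ_\ell$ can be approximated by integers $b_i \equiv c_i \pmod{\ell^n}$, not all divisible by $\ell$; then $T_\ell\big(\sum_i b_i e_i\big) \in \ell^n\End_{\bZ_\ell}(T_\ell X)$, so by the divisibility criterion $\sum_i b_i e_i \in \ell^n\End(X) = \bigoplus_i \ell^n\bZ\, e_i$, forcing $\ell^n \mid b_i$ for every $i$ — impossible for $n$ large. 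Thus the $T_\ell e_i$ are linearly independent over $\bZ_\ell$ and the map is injective. The one place where abelian-variety geometry does real work is the finite-generation step; everything else is the $\ell$-adic formalism organized around the divisibility criterion.
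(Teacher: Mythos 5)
Your torsion-freeness argument, the divisibility criterion, the integral injectivity of $T_\ell$ on $\End(X)$ via the degree computation $\deg\bar\alpha=\ell^{2n\dim Z}\deg\bar\beta_n$, and the final $\ell$-adic approximation step are all correct (the paper itself only quotes this result from \cite[\S19, Theorem~3]{Mumford} without proof). The gap is in the finite-generation step, precisely where you say the geometry does real work. You assert that ``by Poincar\'e's complete reducibility theorem $\End^0(X)\isom\prod_i\Mat_{n_i}(D_i)$ is a finite-dimensional semisimple $\bQ$-algebra.'' Poincar\'e reducibility gives the product decomposition with each $D_i=\End^0(A_i)$ a division ring, but it gives no bound on $\dim_\bQ D_i$; the finite-dimensionality of $\End^0(X)$ over $\bQ$ is essentially equivalent to the finite generation you are proving, and everything that follows (choosing a finite $\bQ$-basis of $\End^0(X)$ inside $\End(X)$, forming the reduced trace, taking the dual lattice) presupposes it. Integrality of each element over $\bZ$ does not supply it either: a division algebra all of whose elements are algebraic of degree $\le 2g$ over $\bQ$ is not obviously finite-dimensional. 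One genuine way to extract finite-dimensionality from what you have is to show the trace pairing $(\alpha,\beta)\mapsto\mathrm{tr}(T_\ell\alpha\cdot T_\ell\beta)$ has zero radical on $\End^0(X)$ (its radical is a nil ideal, killed by semisimplicity), whence any $\bQ$-linearly independent family in $\End(X)$ remains $\bQ_\ell$-linearly independent in $\Mat_{2g}(\bQ_\ell)$ and the rank is at most $4g^2$ --- but this argument is missing from your write-up.

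A second, related issue: the identity $P_\alpha(t)=\det(t\cdot\id-T_\ell\alpha)$ with $P_\alpha\in\bZ[t]$ is \cite[\S19, Theorem~4]{Mumford} (the paper's \cref{thm:char-poly}), and Mumford proves it \emph{after} Theorem~3, via the norm-form lemma, which already requires $\End^0(A)$ to be a finite-dimensional division algebra. Quoting it as a black box here is therefore circular in Mumford's development. It is repairable --- one can prove $\deg\psi=\det T_\ell\psi$ directly from the action of $\psi^*$ on $H^{2g}_{\et}=\bigwedge^{2g}H^1_{\et}$, using only \cref{thm:etale-coh} --- but you would need to say so. The standard, non-circular route to finite generation is Mumford's: show that the saturation $\bQ M\cap\Hom(X,Y)$ of a finitely generated subgroup $M$ is again finitely generated (because $\deg$ extends to a polynomial function on $M\otimes\bR$ and the saturation is discrete there), deduce from your divisibility criterion that saturated finitely generated subgroups inject into $\Hom_{\bZ_\ell}(T_\ell X,T_\ell Y)$ after $\otimes\,\bZ_\ell$ and hence have rank at most $4\dim X\dim Y$, and conclude that $\Hom(X,Y)$ is a union of finitely generated subgroups of bounded rank.
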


For a homomorphism $f \colon X \lra Y$ of abelian varieties, its {\it degree} $\deg f$ is defined to be the order of the kernel $\ker f$, if it is finite, and $0$ otherwise.
In particular, the degree of an isogeny is always a positive integer.

\begin{theorem}[{cf.~\cite[\S19, Theorem~4]{Mumford}}]
\label{thm:char-poly}
For any $\alpha \in \End(X)$, there is a unique monic polynomial $P_\alpha(t) \in \bZ[t]$ of degree $2g$ such that $P_\alpha(n) = \deg(n_X - \alpha)$ for all integers $n$.
Moreover, $P_\alpha(t)$ is the characteristic polynomial of $\alpha$ acting on $T_\ell X$, i.e., $P_\alpha(t) = \det(t - T_\ell \alpha )$, and $P_\alpha(\alpha) = 0$ as an endomorphism of $X$.
\end{theorem}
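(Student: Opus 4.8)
The plan is to \emph{define} $P_\alpha(t)$ to be the characteristic polynomial of $T_\ell\alpha$ and to deduce all three assertions from the single identity
\[
\deg\beta=\det(T_\ell\beta)\qquad\text{for every }\beta\in\End(X),
\]
with the convention that $\deg\beta=0$ when $\ker\beta$ is infinite. First I set $P_\alpha(t):=\det\bigl(t\cdot\id_{T_\ell X}-T_\ell\alpha\bigr)$; since $\alpha$ maps each $\ell^n$-torsion subgroup into itself, $T_\ell\alpha$ is a $\bZ_\ell$-linear endomorphism of the free $\bZ_\ell$-module $T_\ell X$ of rank $2g$ (\cref{thm:etale-coh}), so $P_\alpha(t)\in\bZ_\ell[t]$ is monic of degree $2g$, and the uniqueness part of the theorem is automatic (a monic polynomial of degree $2g$ is determined by its values at $0,1,\dots,2g$). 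Granting the displayed identity and applying it to $\beta=n_X-\alpha$ gives the degree formula $P_\alpha(n)=\deg(n_X-\alpha)$ for all $n\in\bZ$; the remaining work is to check integrality and $\ell$-independence, and the relation $P_\alpha(\alpha)=0$.

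To prove the identity I split into two cases. If $\ker\beta$ is infinite, its reduced identity component is a positive-dimensional abelian subvariety $B\subseteq X$, and $T_\ell B$ is a nonzero $\bZ_\ell$-submodule of $T_\ell X$ killed by $T_\ell\beta$, so $T_\ell\beta$ is not injective and $\det(T_\ell\beta)=0=\deg\beta$. If $\beta$ is an isogeny, $T_\ell\beta$ is injective, and applying the snake lemma to $0\to\ker\beta\to X\xrightarrow{\beta}X\to 0$ together with multiplication by $\ell^n$ identifies the cokernel of $T_\ell\beta$ with the $\ell$-primary part of the finite group scheme $\ker\beta$; counting orders, and using that the cokernel of an injective endomorphism of a free $\bZ_\ell$-module has order $\ell^{v_\ell(\det)}$, we get $v_\ell(\det T_\ell\beta)=v_\ell(\deg\beta)$ for every prime $\ell\ne\ch\bk$.

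Upgrading this valuation identity to an honest equality is the step I expect to be the main obstacle, and it is where the substantive input enters. Both $\beta\mapsto\deg\beta$ and $\beta\mapsto\det(T_\ell\beta)$ extend to \emph{multiplicative polynomial functions} of degree $2g$ on $\End^0(X)$: the latter because $\End^0(X)\otimes_\bQ\bQ_\ell\hookrightarrow\Mat_{2g}(\bQ_\ell)$ by \cref{thm:l-adic-rep}; the former because the theorem of the cube makes $\beta\mapsto\beta^*\sL_0$ a quadratic map into $\NS(X)$, so that $\deg\beta=\chi(\beta^*\sL_0)/\chi(\sL_0)$ is, by Riemann--Roch on abelian varieties, a polynomial of degree $2g$ in $\beta$. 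Now transport the problem along an isogeny $X\sim\prod_j A_j^{m_j}$ into powers of pairwise non-isogenous simple abelian varieties (an isogeny respects both functions, by multiplicativity and conjugation-invariance), and use multiplicativity — together with the observation that an endomorphism acting as an isogeny on one isotypic block $A_j^{m_j}$ and as the identity on the others has degree equal to that of the block-endomorphism — to reduce to the case $X=A^m$ with $A$ simple. There $\End^0(X)=\Mat_m(\End^0(A))$ is a simple $\bQ$-algebra, on which a multiplicative polynomial function is determined by its restriction to the scalar subring $\bZ\cdot\id$; since $\deg(n_{A^m})=n^{2m\dim A}=\det(T_\ell(n_{A^m}))$, the two functions agree there, hence everywhere. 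This proves the identity.

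It remains to harvest the consequences. The degree formula exhibits $n\mapsto\deg(n_X-\alpha)$ as the polynomial $P_\alpha$, and since $P_\alpha$ is a monic polynomial of degree $2g$ whose values at $0,1,\dots,2g$ are the integers $\deg(n_X-\alpha)$ — data that does not involve $\ell$ — it is the Lagrange interpolant of those points, hence has rational coefficients and is independent of $\ell$. For integrality: $\End(X)$ is a finitely generated $\bZ$-module by \cref{thm:l-adic-rep}, so $\alpha$ is integral over $\bZ$; its minimal polynomial $m_\alpha\in\bZ[t]$ satisfies $m_\alpha(\alpha)=0$, hence $m_\alpha(T_\ell\alpha)=0$, so every eigenvalue of $T_\ell\alpha$ — equivalently every root of $P_\alpha$ — is a root of $m_\alpha$ and therefore an algebraic integer; a monic polynomial in $\bQ[t]$ all of whose roots are algebraic integers lies in $\bZ[t]$, so $P_\alpha\in\bZ[t]$. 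Finally, Cayley--Hamilton gives $P_\alpha(T_\ell\alpha)=0$ in $\End_{\bZ_\ell}(T_\ell X)$; the element $P_\alpha(\alpha)\in\End(X)$ is carried to $P_\alpha(T_\ell\alpha)=0$ by the injection $T_\ell\colon\End(X)\otimes\bZ_\ell\hookrightarrow\End_{\bZ_\ell}(T_\ell X)$ of \cref{thm:l-adic-rep}, whence $P_\alpha(\alpha)=0$.
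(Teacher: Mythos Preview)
The paper does not prove this theorem; it is recalled from \cite{Mumford} as a preliminary (and then used, for instance, in the proof of \cref{lemma:red-char-I}), so there is no in-paper argument to compare against. Your outline is essentially Mumford's proof with the order of definition reversed, and the harvesting of consequences at the end (integrality via the minimal polynomial of $\alpha$, and $P_\alpha(\alpha)=0$ via Cayley--Hamilton together with the injectivity in \cref{thm:l-adic-rep}) is correct.

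There is, however, a real gap in the ``upgrade'' step. The assertion that on the simple $\bQ$-algebra $\Mat_m(D)$ ``a multiplicative polynomial function is determined by its restriction to $\bZ\cdot\id$'' is valid only for $\bQ$-valued norm forms --- this is exactly the lemma of Mumford invoked in the proof of \cref{lemma:red-char-I}. But $\det T_\ell$ is a priori only $\bQ_\ell$-valued, so the relevant classification is that of norm forms on $\Mat_m(D)\otimes_\bQ\bQ_\ell\cong\prod_{v\mid\ell}\Mat_m(D\otimes_KK_v)$, which is merely \emph{semisimple} when the center $K$ has several places above $\ell$. On such a product a norm form has the shape $\prod_v N_v^{b_v}$ with independent exponents $b_v$, and agreement on scalars $n\cdot\id$ fixes only the weighted sum $\sum_v b_v[K_v:\bQ_\ell]$, not the individual $b_v$. (Concretely: if $K$ is a quadratic field in which $\ell$ splits, projection to one factor $K_v\cong\bQ_\ell$ is already a degree-one $\bQ_\ell$-valued norm form on $K$ that agrees with $N_{K/\bQ}^{1/2}$ on no integers yet cannot be separated from its conjugate by values on $\bZ\cdot 1$ alone.) The tool to close this is already in your hands: the identity $v_\ell(\det T_\ell\beta)=v_\ell(\deg\beta)$, which you established for \emph{all} isogenies $\beta$ but then did not exploit, applied to central $\beta$ arising from elements of an order in $K$ that are uniformizers at a single $v\mid\ell$ and units at the others, separates the exponents and forces every $b_v$ to equal the common exponent appearing in $\deg$. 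This is the extra input Mumford's argument uses, and it is precisely the step you correctly flagged as ``the main obstacle'' but then sidestepped.
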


We call $P_\alpha(t)$ as in \cref{thm:char-poly} the {\it characteristic polynomial of $\alpha$}.
On the other hand, we can assign to each $\alpha$ the characteristic polynomial $\chi_\alpha(t)$ of $\alpha$ as an element of the semisimple $\bQ$-algebra $\End^0(X)$.
Namely, we define $\chi_\alpha(t)$ to be the characteristic polynomial of the left multiplication $\alpha_L \colon \beta \mapsto \alpha \beta$ for $\beta \in \End^0(X)$ which is a $\bQ$-linear transformation on $\End^0(X)$.
Note that the above definition of $\chi_\alpha(t)$ makes no use of the fact that $\End^0(X)$ is semisimple.
Actually, for semisimple $\bQ$-algebras, it is much more useful to consider the so-called reduced characteristic polynomials.

We recall some basic definitions on semisimple algebras (see \cite[\S9]{Reiner03} for more details).

\begin{definition}
\label{def:red-char}
Let $R$ be a finite-dimensional semisimple algebra over a field $F$ with $\ch F = 0$, and write
\[
R = \bigoplus_{i=1}^{k} R_i,
\]
where each $R_i$ is a simple $F$-algebra.
For any element $r \in R$, as above, we denote by $\chi_r(t)$ the {\it characteristic polynomial of $r$}.
Namely, $\chi_r(t)$ is the characteristic polynomial of the left multiplication $r_L \colon r' \mapsto rr'$ for $r' \in R$.
Let $K_i$ be the center of $R_i$.
Then there exists a finite field extension $E_i/K_i$ splitting $R_i$ (cf.~\cite[\S7b]{Reiner03}), i.e., we have
\[
h_i \colon R_i \otimes_{K_i} E_i \xrightarrow{\ \ \sim\ \ } \Mat_{d_i}(E_i), \text{ where } [R_i:K_i] = d_i^2.
\]
Write $r = r_1 + \cdots + r_k$ with each $r_i \in R_i$.
We first define the {\it reduced characteristic polynomial $\chi_{r_i}^{\reduced}(t)$ of $r_i$} as follows (cf.~\cite[Definition~9.13]{Reiner03}):
\[
\chi_{r_i}^{\reduced}(t) \coloneqq \Nm_{K_i/F} \big(\det(t \, \bI_{d_i} - h_i(r_i \otimes_{K_i} \! 1_{E_i})) \big) \in F[t].
\]
It turns out that $\det(t \, \bI_{d_i} - h_i(r_i \otimes_{K_i} \! 1_{E_i}))$ lies in $K_i[t]$, and is independent of the choice of the splitting field $E_i$ of $R_i$ (cf.~\cite[Theorem~9.3]{Reiner03}).
The {\it reduced norm of $r_i$} is defined by
\[
\Nm_{R_i/F}^{\reduced}(r_i) \coloneqq \Nm_{K_i/F} \big(\det (h_i(r_i \otimes_{K_i} \! 1_{E_i})) \big) \in F.
\]
Finally, as one expects, the {\it reduced characteristic polynomial $\chi_{r}^{\reduced}(t)$} and the {\it reduced norm $\Nm_{R/F}^{\reduced}(r)$ of $r$} are defined by the products:
\begin{equation*}
\label{eq:def-red-char}
\chi_{r}^{\reduced}(t) \coloneqq \prod_{i=1}^{k} \chi_{r_i}^{\reduced}(t) \ \text{ and } \ 
\Nm_{R/F}^{\reduced}(r) \coloneqq \prod_{i=1}^{k} \Nm_{R_i/F}^{\reduced}(r_i).
\end{equation*}
\end{definition}

\begin{remark}
\label{rmk:red-char}
\begin{enumerate}[(1)]
\item It follows from \cite[Theorem~9.14]{Reiner03} that
\begin{equation}
\label{eq:Rei03-Thm9.14}
\chi_r(t) = \prod_{i=1}^{k} \chi_{r_i}(t) = \prod_{i=1}^{k} \chi_{r_i}^{\reduced}(t)^{d_i}.
\end{equation}
\item Note that reduced characteristic polynomials and norms are not affected by change of ground field (cf.~\cite[Theorem~9.27]{Reiner03}).
\end{enumerate}
\end{remark}

We now apply the above algebraic setting to $R = \End^0(X)$.
For any $\alpha \in \End(X)$, let $\chi_{\alpha}^{\reduced}(t)$ denote the reduced characteristic polynomial of $\alpha$ as an element of the semisimple $\bQ$-algebra $\End^0(X)$.
For simplicity, let us first consider the case when $X=A$ is a simple abelian variety and hence $D\coloneqq \End^0(A)$ is a division ring.
Let $K$ denote the center of $D$ which is a field, and $K_0$ the maximal totally real subfield of $K$. Set
\[
d^2 = [D:K], \ e = [K:\bQ] \ \text{ and } \ e_0 = [K_0:\bQ].
\]
Then the equality \eqref{eq:Rei03-Thm9.14} reads as
\[
\chi_\alpha(t) = \chi_{\alpha}^{\reduced}(t)^d.
\]
The lemma below shows that the two polynomials $P_\alpha(t)$ and $\chi_{\alpha}(t)$ are closely related.
Its proof relies on a characterization of normal forms of $D$ over $\bQ$.

For convenience, we include the following definition.
Let $R$ be a finite-dimensional associative algebra over an infinite field $F$.
A {\it norm form} on $R$ over $F$ is a non-zero polynomial function
\[
N_{R/F} \colon R \lra F
\]
(i.e., in terms of a basis of $R$ over $F$, $N_{R/F}(r)$ can be written as a polynomial over $F$ in the components of $r$) such that $N_{R/F}(rr') = N_{R/F}(r) N_{R/F}(r')$ for all $r,r'\in R$.

\begin{lemma}
\label{lemma:red-char-I}
Using notation as above, for any $\alpha\in \End(A)$, we have
\[
P_\alpha(t) = \chi_{\alpha}^{\reduced}(t)^m,
\]
where $m = 2g/(ed)$ is a positive integer.
In particular, the two polynomials $P_\alpha(t)$ and $\chi_\alpha(t)$ have the same complex roots (apart from multiplicities).\footnote{I would like to thank Yuri Zarhin for showing me an argument using the canonical norm form to prove this Lemma~\ref{lemma:red-char-I}.}
\end{lemma}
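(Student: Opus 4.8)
The plan is to exhibit both $P_\alpha(t)$ and $\chi_\alpha^{\reduced}(t)$ as values of \emph{norm forms} on the division $\bQ$-algebra $D = \End^0(A)$, and then appeal to a uniqueness statement asserting that, up to powers, a division algebra carries essentially one norm form. First I would recall (see \cite[\S19]{Mumford}) that the degree map $\alpha \mapsto \deg\alpha$ on $\End(A)$ extends to a homogeneous polynomial function $\deg\colon D \to \bQ$ of degree $2g$ that is multiplicative, $\deg(\beta\gamma) = \deg\beta\cdot\deg\gamma$; multiplicativity on isogenies comes from the exact sequence $0 \to \ker\gamma \to \ker(\beta\gamma) \to \ker\beta \to 0$, and both sides vanish as soon as $\beta$ or $\gamma$ does, since $D$ is a division ring. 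Because $\deg(n_A) = n^{2g}$, this is a genuine norm form on $D$ over $\bQ$ in the sense recalled just before the statement; and $\beta \mapsto \Nm^{\reduced}_{D/\bQ}(\beta)$ is visibly another one, homogeneous of degree $de$.

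The central step is to prove that every norm form $N\colon D \to \bQ$ is a nonnegative power of $\Nm^{\reduced}_{D/\bQ}$. Here I would extend scalars to $\ol\bQ$, splitting $D\otimes_\bQ\ol\bQ \isom \prod_{\sigma\colon K\injmap\ol\bQ}\Mat_d(\ol\bQ)$, under which $\Nm^{\reduced}_{D/\bQ}$ becomes $(M_\sigma)_\sigma \mapsto \prod_\sigma \det M_\sigma$. A nonzero multiplicative polynomial map $\prod_\sigma\Mat_d(\ol\bQ)\to\ol\bQ$ restricts on invertible elements to a character of $\prod_\sigma\GL_d(\ol\bQ)$, hence equals $(M_\sigma)_\sigma \mapsto \prod_\sigma(\det M_\sigma)^{a_\sigma}$ with $a_\sigma\in\bZ_{\ge 0}$ (polynomiality forbids negative exponents); invariance under $\Gal(\ol\bQ/\bQ)$, which permutes the embeddings $\sigma$ transitively, forces all $a_\sigma$ to equal a common value $a$, so $N = (\Nm^{\reduced}_{D/\bQ})^a$. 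Applying this to $N = \deg$ and comparing degrees gives $2g = a\cdot de$, whence $m \coloneqq a = 2g/(ed)$ is a positive integer and $\deg(\beta) = \Nm^{\reduced}_{D/\bQ}(\beta)^m$ for all $\beta \in D$.

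To conclude, I would substitute $\beta = n_A - \alpha$ for $n \in \bZ$: the left side is $P_\alpha(n)$ by \cref{thm:char-poly}, and the right side is $\chi_\alpha^{\reduced}(n)^m$, since unwinding \cref{def:red-char} gives $\Nm^{\reduced}_{D/\bQ}(n_A - \alpha) = \Nm_{K/\bQ}\big(\det(n\bI_d - h(\alpha\otimes_K 1))\big) = \chi_\alpha^{\reduced}(n)$. As $P_\alpha(n) = \chi_\alpha^{\reduced}(n)^m$ holds for all integers $n$ and both sides are polynomials, $P_\alpha(t) = \chi_\alpha^{\reduced}(t)^m$ in $\bQ[t]$; combined with $\chi_\alpha(t) = \chi_\alpha^{\reduced}(t)^d$ from \eqref{eq:Rei03-Thm9.14}, this shows $P_\alpha$ and $\chi_\alpha$ are both powers of $\chi_\alpha^{\reduced}$ and hence share the same complex roots apart from multiplicities.

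The step I expect to be the main obstacle is the uniqueness of norm forms in the middle paragraph — concretely, identifying the nonzero multiplicative polynomial maps on $\prod_\sigma \Mat_d(\ol\bQ)$ with products of powers of determinants (equivalently: up to powers, the reduced norm is the only norm form on a central simple $\bQ$-algebra). One either extracts this from the description of the characters of $\GL_d$ in the theory of linear algebraic groups, or cites the literature on norm forms of central simple algebras; the Galois descent from $\ol\bQ$ to $\bQ$ and the degree bookkeeping are then routine. This is precisely the argument via the canonical norm form alluded to in the footnote.
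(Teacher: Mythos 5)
Your proposal is correct and follows essentially the same route as the paper: both realize $P_\alpha$ and $\chi_\alpha^{\reduced}$ via norm forms on $D$ over $\bQ$ (the degree/determinant on $V_\ell A$ versus $\Nm_{K/\bQ}\circ\Nm^{\reduced}_{D/K}$), invoke the uniqueness of norm forms up to powers, compare degrees to get $m=2g/(ed)$, and evaluate at $n_A-\alpha$. The only difference is that the paper simply cites the lemma in \cite[\S19]{Mumford} for the uniqueness statement, whereas you sketch its proof by splitting over $\ol\bQ$ and using Galois transitivity on the embeddings of $K$ — a correct filling-in of the cited ingredient rather than a different argument.
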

\begin{proof}
By the lemma in \cite[\S19]{Mumford} (located between Corollary~3 and Theorem~4, p.~179), any norm form of $D$ over $\bQ$ is of the following type
\[
(\Nm_{K/\bQ} \circ \Nm^{\reduced}_{D/K})^k \colon D \lra \bQ
\]
for a suitable nonnegative integer $k$, where $\Nm^{\reduced}_{D/K}$ is the reduced norm (aka canonical norm form in the sense of Mumford) of $D$ over $K$.
Now for each $n\in \bZ$, we have
\[
\chi_{\alpha}^{\reduced}(n) = \Nm_{K/\bQ} \circ \Nm^{\reduced}_{D/K}(n_A - \alpha).
\]
On the other hand, the action of $D$ on $V_{\ell} A \coloneqq T_{\ell} A \otimes_{\bZ_{\ell}} \bQ_{\ell}$ defines the determinant map
\[
\det \colon D \lra \bQ_{\ell},
\]
which actually takes on values in $\bQ$ and is a norm form of degree $2g$.
Indeed, let $V_\ell \alpha$ denote the induced map of $\alpha$ on $V_\ell A$, then $P_\alpha(n) = \deg(n_A - \alpha) = \det(n_A - \alpha) = \det(n - V_\ell \alpha)$ for all integers $n$ (see \cref{thm:char-poly}).
Applying the aforementioned lemma in \cite[\S19]{Mumford} to this $\det$, we obtain that for a suitable $m$,
\[
\det(\psi) = (\Nm_{K/\bQ}\circ \Nm^{\reduced}_{D/K}(\psi))^m
\]
for all $\psi \in D$.
It is easy to see that $m$ is $2g/(ed)$.
Then by taking $\psi = n_A - \alpha$, we have that $P_\alpha(n) = \chi_{\alpha}^{\reduced}(n)^m$ for all integers $n$.
This yields that $P_\alpha(t) = \chi_{\alpha}^{\reduced}(t)^m$.
\end{proof}

It is straightforward to generalize \cref{lemma:red-char-I} to the case that $X$ is the $n$-{th} power $A^n$ of a simple abelian variety $A$ since $\End^0(A^n) = \Mat_n(\End^0(A))$ is still a simple $\bQ$-algebra.

\begin{lemma}
\label{lemma:red-char-II}
Let $A$ be a simple abelian variety and $X = A^n$.
Let $\chi_{\alpha}^{\reduced}(t)$ denote the reduced characteristic polynomial of $\alpha$ as an element of the simple $\bQ$-algebra $\End^0(X) = \Mat_n(D)$ with $D = \End^0(A)$.
Then
\[
\chi_{\alpha}(t) = \chi_{\alpha}^{\reduced}(t)^{dn} \text{ and } P_\alpha(t) = \chi_{\alpha}^{\reduced}(t)^m,
\]
where $m = 2g/(edn)$ is a positive integer.
In particular, these two polynomials $P_\alpha(t)$ and $\chi_\alpha(t)$ have the same complex roots (apart from multiplicities).
\end{lemma}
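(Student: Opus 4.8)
The plan is to run the proof of \cref{lemma:red-char-I} essentially verbatim, with the division ring $D = \End^0(A)$ replaced by the central simple $K$-algebra $\End^0(X) = \Mat_n(D)$; a direct reduction to \cref{lemma:red-char-I} through $\End(A)$ is awkward, since the degree of an endomorphism of $A^n$ presented as a matrix over $\End(A)$ is not a product of degrees. Note first that the center of $\Mat_n(D)$ is again $K$ and that $[\Mat_n(D):K] = n^2[D:K] = (nd)^2$, so in the notation of \cref{def:red-char} the integer attached to the unique simple factor $\End^0(X)$ is $nd$, and $\chi_\alpha^{\reduced}(t)$ has degree $nd\cdot[K:\bQ] = nde$. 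Applying \eqref{eq:Rei03-Thm9.14} to the simple $\bQ$-algebra $R = \End^0(X)$ (so $k=1$) immediately yields $\chi_\alpha(t) = \chi_\alpha^{\reduced}(t)^{nd}$.

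For the identity $P_\alpha(t) = \chi_\alpha^{\reduced}(t)^m$ I would argue exactly as in \cref{lemma:red-char-I}. The action of $\End^0(X)$ on $V_\ell X \coloneqq T_\ell X \otimes_{\bZ_\ell} \bQ_\ell$, a $\bQ_\ell$-vector space of dimension $2g$, gives a determinant map $\det \colon \End^0(X) \to \bQ_\ell$; by \cref{thm:char-poly} one has $P_\alpha(n) = \deg(n_X - \alpha) = \det(n_X - \alpha)$ for every $n\in\bZ$, so $\det$ in fact takes values in $\bQ$ and is a norm form on $\End^0(X)$ over $\bQ$ of degree $2g$. Invoking the classification of norm forms --- the lemma in \cite[\S19]{Mumford}, applied to the central simple algebra $\Mat_n(D)$ in place of a division algebra --- one gets $\det(\psi) = \big(\Nm_{K/\bQ}\circ\Nm^{\reduced}_{\Mat_n(D)/K}(\psi)\big)^m$ for all $\psi\in\End^0(X)$ and some integer $m\ge 1$. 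Since $\chi_\alpha^{\reduced}(n) = \Nm_{K/\bQ}\circ\Nm^{\reduced}_{\Mat_n(D)/K}(n_X-\alpha)$ for all $n$ (by \cref{def:red-char}, as $n_X$ becomes the scalar matrix $n\,\bI_{nd}$ over a splitting field), this gives $P_\alpha(n) = \chi_\alpha^{\reduced}(n)^m$ for all integers $n$, hence $P_\alpha(t) = \chi_\alpha^{\reduced}(t)^m$ as polynomials.

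It remains to pin down $m$ and to read off the last assertion. Comparing degrees, $2g = \deg P_\alpha = m\cdot\deg\chi_\alpha^{\reduced} = m\cdot nde$, so $m = 2g/(edn)$, which is therefore automatically a positive integer. Since $P_\alpha(t)$ and $\chi_\alpha(t)$ are both positive powers of the single polynomial $\chi_\alpha^{\reduced}(t)$, they have the same complex roots apart from multiplicities. The only point that is not pure bookkeeping is checking that Mumford's norm-form lemma goes through for $\Mat_n(D)$ rather than for a division algebra; I expect this to be the main (though minor) obstacle, and it is dealt with by the structure theory of central simple algebras together with the fact that every norm form on a central simple algebra is a power of its reduced norm.
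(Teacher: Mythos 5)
Your proposal is correct and is exactly the route the paper intends: the paper offers no separate proof of this lemma, merely asserting that the argument of \cref{lemma:red-char-I} generalizes verbatim because $\End^0(A^n)=\Mat_n(D)$ is still simple with center $K$ and $[\Mat_n(D):K]=(nd)^2$, which is precisely what you carry out. Your one flagged point --- that Mumford's norm-form lemma applies to the central simple algebra $\Mat_n(D)$ and not just to a division algebra --- is resolved as you indicate (over $\ol\bQ$ the algebra splits as a product of matrix algebras and Galois invariance forces equal exponents on the determinant factors), so there is no gap.
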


We recall the following useful structure theorems on $\NS^0(X)$ which play a crucial role in the proof of our main theorem.

\begin{theorem}[{cf.~\cite[\S21, Application~III]{Mumford}}]
\label{thm:NS}
Fix a polarization $\phi \colon X \lra \what X$ that is an isogeny from $X$ to its dual $\what X$ induced from some ample line bundle $\sL_0$ (we suppress this $\sL_0$ since it does not make an appearance here henceforth).
Then the natural map
\[
\NS^0(X) \lra \End^0(X) \quad \text{via} \quad \sL \longmapsto \phi^{-1} \circ \phi_\sL
\]
is injective and its image is precisely the subspace $\big\{ \psi \in \End^0(X) \mid \psi^{\dagger} = \psi \big\}$ of symmetric elements of $\End^0(X)$ under the Rosati involution $^\dagger$ which maps $\psi$ to $\psi^{\dagger} \coloneqq \phi^{-1} \circ \what \psi \circ \phi$.
\end{theorem}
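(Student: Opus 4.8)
The plan is to deduce the statement from the classical description of $\NS(X)$ by symmetric homomorphisms $X \to \what X$; once that description is in hand, the assertion that the image is the Rosati-symmetric part drops out by a purely formal manipulation of dual morphisms.

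First I would record the standard input, a consequence of the theorem of the cube and the properties of the Poincaré bundle on $X \times \what X$: the homomorphism $\sL \longmapsto \phi_\sL$ on $\Pic(X)$ has kernel exactly $\Pic^0(X)$, and it induces an isomorphism
\[
\NS(X) \xrightarrow{\ \sim\ } \Hom^{\mathrm{sym}}(X, \what X)
\]
onto the group of symmetric homomorphisms $\lambda \colon X \to \what X$, i.e.\ those with $\what\lambda = \lambda$ under the canonical identification $\what{\what X} = X$. In particular each $\phi_\sL$ is symmetric; applying this to $\sL_0$, the polarization $\phi = \phi_{\sL_0}$ satisfies $\what\phi = \phi$, and hence $\what{\phi^{-1}} = (\what\phi)^{-1} = \phi^{-1}$ in the isogeny category.

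Granting this, I would finish by a formal computation. Writing $\psi_\sL \coloneqq \phi^{-1} \circ \phi_\sL \in \End^0(X)$, the assignment $\sL \longmapsto \psi_\sL$ is injective on $\NS^0(X)$ because $\phi^{-1}$ is invertible in the isogeny category and $\sL \longmapsto \phi_\sL$ is already injective. For the image, I compute the Rosati transform using $\what{\phi_\sL} = \phi_\sL$ and $\what{\phi^{-1}} = \phi^{-1}$: from $\what{\psi_\sL} = \what{\phi_\sL} \circ \what{\phi^{-1}} = \phi_\sL \circ \phi^{-1}$ one gets
\[
\psi_\sL^{\dagger} = \phi^{-1} \circ \what{\psi_\sL} \circ \phi = \phi^{-1} \circ \phi_\sL \circ \phi^{-1} \circ \phi = \phi^{-1} \circ \phi_\sL = \psi_\sL ,
\]
so the image is contained in $\{\psi \in \End^0(X) : \psi^{\dagger} = \psi\}$. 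Conversely, suppose $\psi \in \End^0(X)$ satisfies $\psi^{\dagger} = \psi$, which unwinds to $\what\psi \circ \phi = \phi \circ \psi$. Set $\lambda \coloneqq \phi \circ \psi$; then $\what\lambda = \what\psi \circ \what\phi = \what\psi \circ \phi = \phi \circ \psi = \lambda$, so $\lambda$ is a symmetric element of $\Hom(X, \what X) \otimes_\bZ \bQ$, and by the isomorphism above (tensored with $\bQ$) we have $\lambda = \phi_\sL$ for some $\sL \in \NS^0(X)$. Then $\psi_\sL = \phi^{-1} \circ \phi_\sL = \phi^{-1} \circ \lambda = \psi$, so $\psi$ lies in the image. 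This identifies the image of $\sL \longmapsto \psi_\sL$ with the Rosati-symmetric part, as claimed.

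The substance of the argument — and hence the main obstacle — is entirely contained in the recalled fact that $\sL \longmapsto \phi_\sL$ identifies $\NS(X)$ with the symmetric homomorphisms $X \to \what X$: that every $\phi_\sL$ is symmetric (symmetry of the Mumford bundle $m^*\sL \otimes p_1^*\sL^{-1} \otimes p_2^*\sL^{-1}$ on $X \times X$ under the swap of the two factors) and, conversely, that every symmetric homomorphism $\lambda$ arises as some $\phi_\sL$ (realizing $\lambda$ via $(\id_X \times \lambda)^*\cP$ and descending). Once that is granted, the only care needed in the remainder is to track the canonical identification $\what{\what X} \isom X$ and the contravariance $\what{g \circ h} = \what h \circ \what g$ consistently through the computation of $\psi_\sL^{\dagger}$ and the reverse construction $\psi \mapsto \phi \circ \psi$.
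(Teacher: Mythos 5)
Your argument is correct, and it is essentially the standard proof from the cited source: the paper states this result without proof, simply referring to \cite[\S21, Application~III]{Mumford}, and your reduction to the identification $\NS(X)\otimes\bQ \isom \Hom^{\mathrm{sym}}(X,\what X)\otimes\bQ$ (where surjectivity onto symmetric homomorphisms uses $\phi_{(1,\lambda)^*\cP}=2\lambda$, harmless after tensoring with $\bQ$) followed by the formal Rosati computation is exactly how Mumford's Application~III is obtained. No gaps.
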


\begin{theorem}[{cf.~\cite[\S21, Theorems~2 and 6]{Mumford}}]
\label{thm:NS-matrix-form}
The endomorphism $\bR$-algebra $\End(X)_\bR \coloneqq \End^0(X) \otimes_\bQ \bR$ is isomorphic to a product of copies of $\Mat_r(\bR)$, $\Mat_r(\bC)$ and $\Mat_r(\bH)$.
Moreover, one can fix an isomorphism so that it carries the Rosati involution into the standard involution $\bA \longmapsto \ol\bA^\sT$.
In particular, $\NS(X)_\bR \coloneqq \NS^0(X) \otimes_\bQ \bR$ is isomorphic to a product of Jordan algebras of the following types:
\begin{align*}
\sH_r(\bR) &= r \times r \text{ symmetric real matrices,} \\
\sH_r(\bC) &= r \times r \text{ Hermitian complex matrices,} \\
\sH_r(\bH) &= r \times r \text{ Hermitian quaternionic matrices.}
\end{align*}
\end{theorem}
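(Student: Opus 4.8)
The plan is to recognize $\End(X)_\bR$, together with the Rosati involution, as a semisimple real algebra carrying a \emph{positive} involution, and then to classify such pairs. First I would record that $\End^0(X)$ is a semisimple $\bQ$-algebra: by Poincar\'e's complete reducibility theorem (see \cite[\S19]{Mumford}) $X$ is isogenous to a product $\prod_i A_i^{n_i}$ of powers of mutually non-isogenous simple abelian varieties, whence $\End^0(X)\cong\prod_i\Mat_{n_i}(D_i)$ with each $D_i=\End^0(A_i)$ a division $\bQ$-algebra. Since $\bQ$ has characteristic zero, $\End(X)_\bR=\End^0(X)\otimes_\bQ\bR$ is again semisimple, and combining the Wedderburn--Artin theorem with the Frobenius theorem --- the only finite-dimensional division $\bR$-algebras are $\bR$, $\bC$, $\bH$ --- shows that every simple factor of $\End(X)_\bR$ has the form $\Mat_r(\bR)$, $\Mat_r(\bC)$, or $\Mat_r(\bH)$. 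This is the first assertion.

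The substance is the ``moreover'' clause, and the essential input is the positivity of the Rosati involution: the symmetric bilinear form $(\psi_1,\psi_2)\mapsto\operatorname{Tr}(\psi_1\psi_2^{\dagger})$ on $\End^0(X)$, where $\operatorname{Tr}$ denotes the trace of left multiplication on the $\bQ$-algebra $\End^0(X)$, is positive definite; this is standard (\cite[\S21]{Mumford}) and rests on the ampleness of $\sL_0$ via the Hodge index theorem. Extending scalars to $\bR$, ${}^{\dagger}$ becomes a positive involution of $\End(X)_\bR$. I would then observe that ${}^{\dagger}$ cannot interchange two distinct Wedderburn factors: were $0\ne\psi$ supported on one factor and $\psi^{\dagger}$ on another, we would get $\psi\psi^{\dagger}=0$, contradicting positivity. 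Hence ${}^{\dagger}$ restricts to a positive involution on each simple factor $B$, and it suffices to show that each such restriction is conjugate, by an $\bR$-algebra automorphism of $B$, to the standard involution $*\colon\bA\mapsto\ol\bA^{\sT}$.

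For a single factor $B$, positivity first pins down the center: if $Z(B)=\bC$ and ${}^{\dagger}$ were $\bC$-linear, then $\psi=\sqrt{-1}\,\bI$ would give $\operatorname{Tr}(\psi\psi^{\dagger})=\operatorname{Tr}(-\bI)<0$, so ${}^{\dagger}$ must act as complex conjugation on $Z(B)$, matching $*$; when $Z(B)=\bR$ both are the identity there. Thus $*\circ{}^{\dagger}$ is a $Z(B)$-algebra automorphism of $B$, hence inner by the Skolem--Noether theorem: $\psi^{\dagger}=u\psi^{*}u^{-1}$ for a unit $u$. Imposing $({}^{\dagger})^{2}=\id$ and rescaling $u$ inside $Z(B)^{\times}$ forces $u^{*}=\pm u$; the skew case $u^{*}=-u$ makes the trace pairing degenerate or indefinite (for $B=\Mat_r(\bR)$ it is the symplectic involution, for which an off-diagonal matrix unit $E\ne0$ satisfies $EE^{\dagger}=0$; there is an analogous test element for $B=\Mat_r(\bH)$), so $u^{*}=u$. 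By the spectral theorem over $\bR$, $\bC$, or $\bH$, a unitary conjugation brings $u$ into diagonal form with entries in $\bR^{\times}$; evaluating the trace pairing on matrix units then shows that these entries all share one sign, i.e.\ $u$ is definite. Replacing $u$ by $-u$ if necessary, $u$ is positive definite, so it admits a positive-definite square root $s=s^{*}\in B$, and conjugation by $s^{-1}$ carries ${}^{\dagger}$ to $*$, since $s^{-1}\psi^{\dagger}s=s^{-1}u\psi^{*}u^{-1}s=s\psi^{*}s^{-1}=(s^{-1}\psi s)^{*}$.

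Assembling these factorwise conjugations yields a single isomorphism carrying ${}^{\dagger}$ to $\bA\mapsto\ol\bA^{\sT}$, which is the ``moreover'' clause. The assertion about $\NS(X)_\bR$ is then immediate from \cref{thm:NS}: that theorem identifies $\NS^0(X)$ with the subspace $\{\psi\in\End^0(X):\psi^{\dagger}=\psi\}$ of ${}^{\dagger}$-symmetric elements, so $\NS(X)_\bR=\NS^0(X)\otimes_\bQ\bR$ is the subspace of ${}^{\dagger}$-symmetric elements of $\End(X)_\bR$, which under the isomorphism just built is the product over the simple factors of their subspaces of $*$-symmetric (i.e.\ Hermitian) matrices, namely a product of copies of $\sH_r(\bR)$, $\sH_r(\bC)$, and $\sH_r(\bH)$. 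The crux is the single-factor classification of positive involutions carried out in the third paragraph --- everything else is a formal reduction --- and in a final writeup one could instead simply invoke the Albert classification of positive involutions or \cite[\S21]{Mumford}.
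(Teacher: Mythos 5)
This theorem is quoted in the paper as background, with no proof given beyond the citation of \cite[\S21, Theorems~2 and 6]{Mumford}, so the relevant comparison is with Mumford's argument --- and your proposal is, in essence, a correct reconstruction of it. The skeleton is right and complete: semisimplicity of $\End^0(X)$ via Poincar\'e reducibility, Wedderburn--Artin plus Frobenius's theorem over $\bR$, positivity of the Rosati form forcing ${}^\dagger$ to preserve each simple factor and to act as conjugation on any factor with center $\bC$, Skolem--Noether to write $\psi^\dagger=u\psi^*u^{-1}$ with $u=u^*$ definite, and conjugation by a square root of $u$ to reach the standard involution; the statement about $\NS(X)_\bR$ then falls out of \cref{thm:NS} by taking ${}^\dagger$-fixed points. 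This is precisely Mumford's ``Step I'' (classification of the pair $(\End(X)_\bR,{}^\dagger)$ over $\bR$), so you have not taken a different route but rather supplied the omitted proof.

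Two small points you should make explicit in a final writeup. First, positivity is proved over $\bQ$ (the form $\operatorname{Tr}(\psi_1\psi_2^\dagger)$ on $\End^0(X)$), and you need it over $\bR$; this is immediate because a positive definite rational quadratic form stays positive definite after extension to $\bR$, but it should be said, since every subsequent step uses positivity of the extended form. Second, the exclusion of the skew case $u^*=-u$ and of indefinite $u$ is where all the content sits, and you only exhibit the test element for $\Mat_r(\bR)$; for $\Mat_r(\bH)$ the analogous computation (a matrix unit $E_{ij}$ with $i\neq j$ satisfies $E_{ij}(uE_{ij}^*u^{-1})$ having nonpositive trace when $u$ is skew or indefinite) should be written out, and for the center-$\bC$ case one should note that after arranging $u^*=u$ by a central rescaling the same matrix-unit test applies. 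These are routine but they are the crux, as you yourself observe.
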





\section{Proof of Theorem~\ref{thmA}}
\label{section-proof}


\subsection{Some results on dynamical degrees}
\label{subsec-prelim-dyn-deg}

We first prepare some results used later to prove our main theorem.
Recall that in the complex dynamics, the dynamical degrees are bimeromorphic invariants of the dynamics system (see e.g. \cite[Theorem~4.2]{DS17}).
We have also shown the birational invariance of numerical dynamical degrees in arbitrary characteristic (cf.~\cite[Lemma~2.8]{Hu19}).
Below is a similar consideration which should be of interest in its own right.
Note, however, that we have not shown the birational invariance of cohomological dynamical degrees, which is actually one of the questions raised by Truong (see \cite[Question~5]{Truong1611}).

\begin{lemma}
\label{lemma:bir-inv}
Let $\pi \colon X \lra Y$ be a surjective morphism of smooth projective varieties defined over $\bk$.
Let $f$ (resp. $g$) be a surjective self-morphism of $X$ (resp. $Y$) such that $\pi \circ f = g \circ \pi$.
Then $\chi_i(f) \ge \chi_i(g)$ for any $0\le i\le 2 \dim Y$ and $\lambda_k(f) \ge \lambda_k(g)$ for any $0\le k\le \dim Y$.
\end{lemma}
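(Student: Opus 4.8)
The plan is to reduce both inequalities to a single fact: the pullback maps
\[
\pi^* \colon H^i_{\et}(Y, \bQ_\ell) \lra H^i_{\et}(X, \bQ_\ell), \quad 0 \le i \le 2\dim Y,
\]
and
\[
\pi^* \colon N^k(Y)_\bR \lra N^k(X)_\bR, \quad 0\le k\le \dim Y,
\]
are injective. Granting this, everything is formal. The relation $\pi\circ f = g\circ\pi$ gives $f^*\circ\pi^* = \pi^*\circ g^*$ on $H^\bullet_{\et}$ and on $N^\bullet$, so the image $V_i \coloneqq \pi^*\big(H^i_{\et}(Y,\bQ_\ell)\big)$ is an $f^*$-invariant subspace of $H^i_{\et}(X,\bQ_\ell)$, and since $\pi^*$ is injective it conjugates $g^*|_{H^i_{\et}(Y,\bQ_\ell)}$ to $f^*|_{V_i}$; in particular the two endomorphisms have the same characteristic polynomial, hence the same spectral radius. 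Since the characteristic polynomial of the restriction of a linear endomorphism to an invariant subspace divides that of the whole endomorphism, we get $\chi_i(g) = \rho\big(f^*|_{V_i}\big) \le \rho\big(f^*|_{H^i_{\et}(X,\bQ_\ell)}\big) = \chi_i(f)$. Replacing $H^i_{\et}(-,\bQ_\ell)$ by $N^k(-)_\bR$ throughout gives $\lambda_k(g) \le \lambda_k(f)$ verbatim. (The stated ranges are exactly those in which $H^i_{\et}(Y,\bQ_\ell)$, resp.\ $N^k(Y)_\bR$, can fail to vanish.)

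The real content is thus the injectivity of $\pi^*$, which I would obtain via a multisection and the projection formula. Put $c \coloneqq \dim X - \dim Y$, fix a projective embedding of $X$, and let $W\subseteq X$ be the intersection of $c$ sufficiently general hyperplane sections, a closed subscheme of pure dimension $\dim Y$. By generic flatness of $\pi$ over a dense open of $Y$ (over which all fibres have pure dimension $c$) together with Bertini, $\pi|_W\colon W\to Y$ is surjective with finite general fibre, hence generically finite of some degree $\delta\ge 1$, so that $\pi_*[W] = \delta\,[Y]$. In $\ell$-adic cohomology, using the cycle class $[W]\in H^{2c}_{\et}(X,\bQ_\ell)(c)$, the Gysin pushforward $\pi_*\colon H^{i+2c}_{\et}(X,\bQ_\ell)(c)\to H^i_{\et}(Y,\bQ_\ell)$ (defined via Poincar\'e duality since $\pi$ is proper and $X,Y$ are smooth projective), and the projection formula, one has, for any $\xi\in H^i_{\et}(Y,\bQ_\ell)$,
\[
\pi_*\big([W]\cup\pi^*\xi\big) = \big(\pi_*[W]\big)\cup\xi = \delta\,\xi ,
\]
so that $\pi^*\xi = 0$ forces $\xi = 0$. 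For numerical classes the argument is identical: with the intersection product $N^c(X)\times N_j(X)\to N_{j-c}(X)$ on the smooth $X$, the proper pushforward on numerical cycle groups, and the projection formula $\pi_*\big([W]\cdot\pi^*\beta\big) = \big(\pi_*[W]\big)\cdot\beta = \delta\,\beta$, one concludes that $\pi^*\beta = 0$ implies $\beta = 0$ in $N^k(Y)_\bR$.

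The only step demanding care is this injectivity assertion, i.e.\ the construction of the multisection $W$ and the compatibility of cycle classes, Gysin (resp.\ proper pushforward) maps, and the projection formula in arbitrary characteristic; but all of these ingredients are standard — Deligne's cycle class map and Poincar\'e duality for the $\ell$-adic cohomology of smooth projective varieties on the one hand, and Fulton's intersection theory on the other — so no genuinely new difficulty arises. I would close by remarking that the argument only produces an inequality, because $\pi^*$ is in general not surjective; this is in line with the fact that, in contrast to numerical dynamical degrees, cohomological dynamical degrees are not yet known to be birational invariants.
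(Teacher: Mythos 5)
Your argument is correct and is essentially the paper's: both reduce the lemma to the injectivity of $\pi^*$ on $H^\bullet_{\et}$ and on $N^\bullet$, and then use that the image of $\pi^*$ is an $f^*$-invariant subspace on which $f^*$ is conjugate to $g^*$, so the spectral radius can only grow. The only difference is that where you prove injectivity by hand via a multisection and the projection formula, the paper simply cites Kleiman (Proposition~1.2.4 of \emph{Algebraic cycles and the Weil conjectures}) for the cohomological case and its own earlier work for the numerical case — your argument is in substance the standard proof of that cited result.
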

\begin{proof}
We have the following commutative diagram of $\bQ_\ell$-vector spaces:
\[
\begin{tikzcd}
H^i_{\et}(Y, \bQ_\ell) \arrow{r}{\pi^*} \arrow{d}[swap]{g^*} & H^{i}_{\et}(X, \bQ_\ell) \arrow{d}{f^*} \\
H^i_{\et}(Y, \bQ_\ell) \arrow{r}{\pi^*} & H^{i}_{\et}(X, \bQ_\ell).
\end{tikzcd}
\]
The first part follows readily from \cite[Proposition~1.2.4]{Kleiman68} which asserts that the pullback map $\pi^*$ on $\ell$-adic \'etale cohomology is injective and hence $\pi^*H^i_{\et}(Y, \bQ_\ell)$ is an $f^*$-invariant subspace of $H^i_{\et}(X, \bQ_\ell)$.
The second part is similar; see also \cite[Lemma~2.8]{Hu19} for a stronger version.
\end{proof}

The following useful inequality was already noticed by Truong \cite{Truong1611}. We provide a proof for the sake of completeness.

\begin{lemma}
\label{lemma:dyn-deg-ineq}
Let $X$ be a smooth projective varieties defined over $\bk$, and $f$ a surjective self-morphism of $X$.
Then we have $\lambda_k(f) \le \chi_{2k}(f)$ for any $0\le k \le \dim X$.
\end{lemma}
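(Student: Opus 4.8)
The plan is to relate the numerical equivalence classes of cycles on $X$ to the $\ell$-adic cohomology classes via the cycle class map, and then use compatibility with the pullback $f^*$ together with the fact that the cycle class map has the appropriate target degree. Concretely, there is a cycle class map $\class \colon N^k(X)_{\bQ_\ell} \lra H^{2k}_{\et}(X, \bQ_\ell)$ (induced from $N^k(X) \otimes_\bZ \bZ_\ell \to H^{2k}_{\et}(X, \bZ_\ell)$ after tensoring with $\bQ_\ell$), which is $f^*$-equivariant because the pullback of algebraic cycles is compatible with the pullback on \'etale cohomology under the cycle class map.

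The first step I would carry out is to establish injectivity of this cycle class map on $N^k(X)_{\bQ_\ell}$. The point is that numerical equivalence is, by definition, the finest equivalence relation for which the intersection pairing is nondegenerate; combined with the fact that the cup-product pairing $H^{2k}_{\et}(X, \bQ_\ell) \times H^{2(g-k)}_{\et}(X, \bQ_\ell) \to H^{2g}_{\et}(X, \bQ_\ell) \isom \bQ_\ell$ is Poincar\'e duality and that it is compatible, via the cycle class map, with the intersection pairing on numerical cycles, one gets that a numerical class mapping to zero in cohomology must pair to zero with every algebraic class in complementary codimension, hence is numerically trivial, hence zero in $N^k(X)$. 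This is standard (see e.g. Kleiman) and I would cite it rather than reprove it.

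With an $f^*$-equivariant injection $N^k(X)_{\bQ_\ell} \injmap H^{2k}_{\et}(X, \bQ_\ell)$ in hand, the conclusion is immediate: the image is an $f^*$-stable subspace, and the spectral radius of a linear operator restricted to an invariant subspace is at most the spectral radius on the whole space. Since $\lambda_k(f) = \rho(f^*|_{N^k(X)_\bR}) = \rho(f^*|_{N^k(X)_{\bQ_\ell}})$ (the spectral radius of an integer matrix does not depend on which characteristic-zero field we extend scalars to) and $\chi_{2k}(f) = \rho(f^*|_{H^{2k}_{\et}(X,\bQ_\ell)})$, this gives $\lambda_k(f) \le \chi_{2k}(f)$ for every $0 \le k \le \dim X$.

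The main obstacle, such as it is, is purely bookkeeping: one must make sure the cycle class map really is $f^*$-equivariant (this requires the functoriality of cycle classes under flat or l.c.i.\ pullback along $f$, which holds since $f$ is a morphism of smooth varieties) and that the injectivity statement is the one phrased with $\bQ_\ell$-coefficients rather than with $\bR$-coefficients. Neither is difficult; both are available in the references already cited in the excerpt (Kleiman \cite{Kleiman68}). I would therefore keep the proof short, essentially: exhibit the equivariant injection, invoke injectivity, and conclude by the invariant-subspace inequality for spectral radii.
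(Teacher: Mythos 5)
Your argument hinges on the existence of a cycle class map $N^k(X)_{\bQ_\ell} \to H^{2k}_{\et}(X,\bQ_\ell)$, i.e.\ on the cycle map $\gamma_X^k\colon \CH^k(X) \to H^{2k}_{\et}(X,\bQ_\ell)$ factoring through numerical equivalence. That factorization requires that every numerically trivial cycle be homologically trivial, which is Grothendieck's standard conjecture $D$ and is open for $k\ge 2$ (it is known for $k=1$ by Matsusaka, as the paper notes in \cref{rmk:div-equiv-relation}). Your injectivity argument in fact proves the opposite (and known) implication: a class whose cohomology class vanishes pairs to zero with all algebraic classes and is therefore numerically trivial — that is, homological equivalence is finer than numerical equivalence. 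This gives a surjection $(\CH^k(X)/\!\sim_{\rm hom})\otimes\bQ_\ell \twoheadrightarrow N^k(X)\otimes\bQ_\ell$, not the injection $N^k(X)_{\bQ_\ell}\hookrightarrow H^{2k}_{\et}(X,\bQ_\ell)$ you need, so as written the proof has a genuine gap for $k\ge 2$.

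The repair is short and is exactly what the paper does: work with the intermediate space $B^k := (\CH^k(X)/\!\sim_{\rm hom})\otimes_\bZ\bQ_\ell$. It injects $f^*$-equivariantly into $H^{2k}_{\et}(X,\bQ_\ell)$ by definition of homological equivalence, and it surjects $f^*$-equivariantly onto $N^k(X)\otimes_\bZ\bQ_\ell$ because homological equivalence is finer than numerical equivalence. The spectral radius of $f^*$ on an invariant subspace is at most that on the ambient space, and the spectral radius on an equivariant quotient is at most that on the source; chaining the two gives $\lambda_k(f) \le \rho\bigl(f^*|_{B^k}\bigr) \le \chi_{2k}(f)$ without invoking conjecture $D$. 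Your closing remarks about $f^*$-equivariance of the cycle map and independence of the spectral radius from the choice of characteristic-zero coefficient field are fine and carry over unchanged.
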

\begin{proof}
Note that the $\ell$-adic \'etale cohomology $H^{\bullet}_{\et}(X, \bQ_\ell)$ is a Weil cohomology after the non-canonical choice of an isomorphism $\bZ_\ell(1) \isom \bZ_\ell$ (cf.~\cite[Example~1.2.5]{Kleiman68}).
So we have the following cycle map
\[
\gamma_X^k \colon \CH^k(X) \lra H^{2k}_{\et}(X, \bQ_\ell),
\]
where the $k$-th Chow group $\CH^k(X)$ of $X$ denotes the group of algebraic cycles of codimension $k$ modulo linear equivalence, i.e., $\CH^k(X) \coloneqq {Z}^k(X)/ \!\sim$.
Recall that a cycle $Z\in {Z}^k(X)$ is homologically equivalent to zero if $\gamma_X^k(Z) = 0$.
Also, it is well-known that homological equivalence $\sim_{\rm hom}$ is finer than numerical equivalence $\num$ (cf.~\cite[Proposition~1.2.3]{Kleiman68}).
Hence we have the following diagram of finite-dimensional $\bQ_\ell$-vector spaces (respecting the natural pullback action $f^*$ by the functoriality of the cycle map):
\begin{equation}
\label{eq:num-coh}
\begin{tikzcd}
(\CH^k(X)/ \!\sim_{\rm hom}) \otimes_\bZ \bQ_\ell \arrow[r, hook] \arrow[d, two heads] & H^{2k}_{\et}(X, \bQ_\ell) \\
(\CH^k(X)/ \!\num) \otimes_\bZ \bQ_\ell = N^k(X) \otimes_\bZ \bQ_\ell. & 
\end{tikzcd}
\end{equation}
Thus \cref{lemma:dyn-deg-ineq} follows.
\end{proof}

\begin{remark}
\label{rmk:div-equiv-relation}
When $k=1$, by a theorem of Matsusaka \cite{Matsusaka57}, homological equivalence coincides with numerical equivalence (in general, Grothendieck's standard conjecture $D$ predicts that they are equal for all $k$).
Furthermore, after tensoring with $\bQ$, both of them are also equivalent to algebraic equivalence $\approx$.
Namely, we have
\[
\NS(X)_\bQ = (\CH^1(X)/ \!\approx) \otimes_\bZ \bQ \isom (\CH^1(X)/ \!\sim_{\rm hom}) \otimes_\bZ \bQ \isom N^1(X) \otimes_\bZ \bQ.
\]
In particular, the cycle map $\gamma_X^1$ induces an injection
\[
N^1(X) \otimes_\bZ \bQ_\ell \longinjmap H^{2}_{\et}(X, \bQ_\ell).
\]
\end{remark}

\subsection{Extension of the pullback action to endomorphism algebras}
\label{subsec-extension}

For an endomorphism $\alpha$ of an abelian variety $X$, the following easy lemma sheds the light on the connection between the first numerical dynamical degree $\lambda_1(\alpha)$ of $\alpha$ and the induced action $\alpha^*$ on the endomorphism $\bQ$-algebra $\End^0(X)$,
while the latter is closely related to the matrix representation of $\alpha$ in $\End(X)_\bR$ or $\End(X)_\bC$ (see e.g. \cref{lemma:Kronecker}).

\begin{lemma}
\label{lemma:NS-End}
Fix a polarization $\phi \colon X \lra \what X$ as in \cref{thm:NS}. For any endomorphism $\alpha$ of $X$, we can extend the pullback action $\alpha^*$ on $\NS^0(X)$ to $\End^0(X)$ as follows:
\[
\alpha^* \colon \End^0(X) \lra \End^0(X) \quad \text{via} \quad \psi \longmapsto \alpha^*\psi \coloneqq \alpha^{\dagger} \circ \psi \circ \alpha.\footnote{Here by abuse of notation, we still denote this action by $\alpha^*$.
We would always write $\alpha^*|_{\End^0(X)}$ to emphasize the acting space in practice.}
\]
\end{lemma}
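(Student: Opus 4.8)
The plan is to verify that the $\bQ$-linear self-map $T_\alpha \colon \psi \longmapsto \alpha^{\dagger}\circ\psi\circ\alpha$ of $\End^0(X)$ carries the subspace of $^\dagger$-symmetric elements into itself and, under the identification of \cref{thm:NS}, restricts there to the ordinary pullback action $\alpha^*$ on $\NS^0(X)$. Since $T_\alpha$ is manifestly defined on all of $\End^0(X)$, this is exactly what ``extending $\alpha^*$'' should mean.

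First I would record the standard functoriality of the homomorphism $\phi_\sL$: for every $\sL\in\Pic(X)$ one has $\phi_{\alpha^*\sL} = \what\alpha \circ \phi_\sL \circ \alpha$ in $\Hom(X,\what X)$. This follows from the translation identity $\alpha\circ t_x = t_{\alpha(x)}\circ\alpha$ for $x\in X$, which gives $t_x^*\circ\alpha^* = \alpha^*\circ t_{\alpha(x)}^*$ on line bundles, so that
\[
\phi_{\alpha^*\sL}(x) = t_x^*(\alpha^*\sL) \otimes (\alpha^*\sL)^{-1} = \alpha^*\big(t_{\alpha(x)}^*\sL \otimes \sL^{-1}\big) = \what\alpha\big(\phi_\sL(\alpha(x))\big),
\]
where the last step uses that $\what\alpha$ is by definition the pullback $\alpha^*$ on $\Pic^0(X)$; as this holds for all $x\in X$ and both sides are homomorphisms $X\to\what X$, the claimed identity follows (cf.~\cite{Mumford}).

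Granting this, let $\sL$ have image $\psi = \phi^{-1}\circ\phi_\sL \in \End^0(X)$ under the injection of \cref{thm:NS}. Then the image of $\alpha^*\sL$ is
\[
\phi^{-1}\circ\phi_{\alpha^*\sL} = \phi^{-1}\circ\what\alpha\circ\phi_\sL\circ\alpha = \big(\phi^{-1}\circ\what\alpha\circ\phi\big)\circ\big(\phi^{-1}\circ\phi_\sL\big)\circ\alpha = \alpha^{\dagger}\circ\psi\circ\alpha = T_\alpha(\psi),
\]
which shows the two actions agree on $\NS^0(X)$. Finally, $T_\alpha(\psi)$ is again $^\dagger$-symmetric because the Rosati involution is an anti-automorphism and an involution: $(\alpha^{\dagger}\circ\psi\circ\alpha)^{\dagger} = \alpha^{\dagger}\circ\psi^{\dagger}\circ(\alpha^{\dagger})^{\dagger} = \alpha^{\dagger}\circ\psi\circ\alpha$, using $\psi^{\dagger}=\psi$. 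Extending $\bQ$-linearly over $\NS^0(X)$ completes the argument.

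I do not expect a genuine obstacle here; the one point requiring care is the bookkeeping of conventions — that $\what\alpha$ is the pullback $\alpha^*$ on $\Pic^0(X)$, and that $^\dagger$ reverses the order of composition — so that it is precisely $\alpha^{\dagger}\circ\psi\circ\alpha$ (and not, say, $\alpha\circ\psi\circ\alpha^{\dagger}$) that matches $\phi^{-1}\circ\phi_{\alpha^*\sL}$ and lands back in the symmetric subspace.
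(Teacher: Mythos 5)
Your argument is correct and follows the same route as the paper: both rest on the identity $\phi_{\alpha^*\sL} = \what\alpha\circ\phi_\sL\circ\alpha$ (which the paper simply cites from \cite[\S15, Theorem~1]{Mumford} while you rederive it from the translation identity) and then conjugate by $\phi$ to get $\alpha^\dagger\circ\psi\circ\alpha$ on the symmetric subspace. Your extra verification that $T_\alpha$ preserves $^\dagger$-symmetric elements is a harmless addition already implied by the agreement with the pullback on $\NS^0(X)$.
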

\begin{proof}
We shall identify $\NS^0(X) \ni \sL$ with the subspace of symmetric elements $\phi^{-1}\circ\phi_\sL$ of the endomorphism $\bQ$-algebra $\End^0(X)$ in virtue of \cref{thm:NS}.
Then the natural pullback action $\alpha^*$ on $\NS^0(X)$ could be reinterpreted in the following way:
\begin{align*}
\alpha^* \colon \NS^0(X) & \lra \NS^0(X) \\
\phi^{-1} \circ \phi_\sL & \longmapsto \phi^{-1} \circ \phi_{\alpha^* \! \sL}.
\end{align*}
Note that $\phi^{-1} \circ \phi_{\alpha^* \! \sL} = \phi^{-1} \circ \what \alpha \circ \phi_\sL \circ \alpha = \alpha^{\dagger} \circ \phi^{-1} \circ \phi_\sL \circ \alpha$,
where $\what \alpha$ is the induced dual endomorphism of $\what X$ and $\alpha^\dagger = \phi^{-1} \circ \what \alpha \circ \phi$ is the Rosati involution of $\alpha$;
for the first equality, see \cite[\S15, Theorem~1]{Mumford}.
This gives rise to an action of $\alpha$ on the whole endomorphism algebra $\End^0(X)$ by 
sending $\psi\in \End^0(X)$ to $\alpha^{\dagger} \circ \psi \circ \alpha$.
It is easy to see that the restriction of $\alpha^*|_{\End^0(X)}$ to $\NS^0(X)$ is just the natural pullback action $\alpha^*$ on $\NS^0(X)$.
\end{proof}

The lemma below plays a crucial role in the proof of our main theorem by giving a characterization of the above induced action $\alpha^*$ on certain endomorphism algebras of abelian varieties.
Here we consider a more general version from the aspect of linear algebra.

\begin{lemma}
\label{lemma:Kronecker}
\begin{enumerate}[{\em (1)}]
\item \label{lemma:Kronecker1}
If $\bA\in \Mat_n(\bR)$, then the linear transformation
\[
f_\bA \colon \Mat_n(\bR) \lra \Mat_n(\bR) \quad \text{via} \quad \bB \longmapsto \bA\!^\sT \bB \bA
\]
of $n^2$-dimensional $\bR$-vector space $\Mat_n(\bR)$ could be represented by $\bA\otimes \bA$, the Kronecker product of $\bA$ and itself.

\item \label{lemma:Kronecker2}
If $\bA\in \Mat_n(\bC)$, then the following linear transformation
\[
f_\bA \colon \Mat_n(\bC) \lra \Mat_n(\bC) \quad \text{via} \quad \bB \longmapsto \ol\bA^\sT \bB \bA
\]
of $n^2$-dimensional $\bC$-vector space $\Mat_n(\bC)$ could be represented by $\bA\otimes \ol\bA$, the Kronecker product of $\bA$ and its complex conjugate $\ol\bA$.

\item \label{lemma:Kronecker3}
If $\bA\in \Mat_n(\bC)$, then the following linear transformation
\[
f_\bA \colon \Mat_n(\bC) \lra \Mat_n(\bC) \quad \text{via} \quad \bB \longmapsto \ol\bA^\sT \bB \bA
\]
of $2n^2$-dimensional $\bR$-vector space $\Mat_n(\bC)$ could be represented by the block diagonal matrix $(\bA\otimes \ol\bA) \oplus (\ol\bA\otimes \bA)$.
\end{enumerate}
\end{lemma}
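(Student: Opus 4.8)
The plan is to handle all three parts through one device: fix the standard basis $\{E_{ij}\}_{1\le i,j\le n}$ of matrix units, compute the image of each $E_{ij}$ under $f_\bA$, and read off the representing matrix; part (3) will then follow from part (2) via the elementary principle that a $\bC$-linear operator, viewed over $\bR$, complexifies to the direct sum of itself and its complex conjugate. For part (1), a direct computation gives $f_\bA(E_{ij}) = \bA^\sT E_{ij}\,\bA = \sum_{k,l} a_{ik}a_{jl}\,E_{kl}$, so that in the lexicographically ordered basis $\{E_{ij}\}$ the matrix of $f_\bA$ is $\bA^\sT\otimes\bA^\sT = (\bA\otimes\bA)^\sT$. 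Since every square matrix is similar to its transpose, $f_\bA$ is represented by $\bA\otimes\bA$ in a suitable basis; and in any case only the characteristic polynomial of $f_\bA$, which is unchanged under transposition, will matter in the applications. (Alternatively, passing to the dual basis under the trace form $(\bB,\bC)\mapsto\operatorname{tr}(\bB^\sT\bC)$ replaces $f_\bA$ by its adjoint $\bC\mapsto\bA\bC\bA^\sT$, whose matrix in $\{E_{ij}\}$ is literally $\bA\otimes\bA$.)

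Part (2) is the identical computation carried out over $\bC$, with the conjugate transpose $\ol\bA^\sT$ in place of $\bA^\sT$: one obtains $f_\bA(E_{ij}) = \sum_{k,l}\ol{a_{ik}}\,a_{jl}\,E_{kl}$, hence the matrix $\ol\bA^\sT\otimes\ol\bA^\sT = (\bA\otimes\ol\bA)^\sT$, and $f_\bA$ is represented by $\bA\otimes\ol\bA$ in the same sense as in part (1). Note that here $f_\bA$ is genuinely $\bC$-linear, so this is an honest statement about the $n^2$-dimensional complex vector space $\Mat_n(\bC)$.

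For part (3) we regard $W\coloneqq\Mat_n(\bC)$ as a $2n^2$-dimensional real vector space and describe $f_\bA$ as a real operator. I would invoke the standard fact that if $T$ is a $\bC$-linear endomorphism of a finite-dimensional complex vector space $V$ with matrix $M$ in some $\bC$-basis, then the complexification $(V_\bR)\otimes_\bR\bC$ decomposes as $V^{1,0}\oplus V^{0,1}$, the $\pm i$-eigenspaces of the $\bC$-linear extension of the complex structure of $V$, with $V^{1,0}\isom V$ and $V^{0,1}\isom\ol V$, and the complexified operator $(T_\bR)\otimes_\bR\bC$ acts as $T\oplus\ol T$, i.e.\ is represented by $M\oplus\ol M$; equivalently, the eigenvalues of $T_\bR$ over $\bC$ are those of $M$ together with their complex conjugates. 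Applying this with $T=f_\bA$ and $M=\bA\otimes\ol\bA$ from part (2), and using $\ol{\bA\otimes\ol\bA}=\ol\bA\otimes\bA$, gives the asserted block form $(\bA\otimes\ol\bA)\oplus(\ol\bA\otimes\bA)$.

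The computations in parts (1) and (2) are purely mechanical; the only delicate point is bookkeeping — keeping track of the transpose and conjugate thrown off by the matrix-unit computation, and, in part (3), being precise about what ``represented by'' should mean once the scalars have been restricted to $\bR$. I expect part (3) to be the main obstacle, modest though it is, since it is the one step that involves a genuine restriction and re-extension of scalars rather than a bare matrix manipulation.
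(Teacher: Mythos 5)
Your proposal follows essentially the same route as the paper: both compute the action on the matrix units and read off a Kronecker product for parts (1)--(2), and both deduce part (3) from part (2) by the realification--complexification principle (the paper verifies explicitly that the real $2n\times 2n$ matrix built from $\re\bM$ and $\im\bM$ is conjugate to $\bM\oplus\ol\bM$, whereas you cite this as standard; the paper also factors $f_\bA$ as left multiplication by $\ol\bA^\sT$ composed with right multiplication by $\bA$ and multiplies the two Kronecker factors, whereas you compute $f_\bA(E_{ij})$ in one stroke). There is, however, a bookkeeping slip in your part (2), precisely at the point you yourself flagged as delicate. From your own (correct) formula $f_\bA(E_{ij})=\sum_{k,l}\ol{a_{ik}}\,a_{jl}\,E_{kl}$, the matrix in the lexicographic basis is $\ol\bA^\sT\otimes\bA^\sT=(\ol\bA\otimes\bA)^\sT$, not $\ol\bA^\sT\otimes\ol\bA^\sT$: the latter has entries $\ol{a_{ik}}\,\ol{a_{jl}}$ and is the matrix of $\bB\mapsto\ol\bA^\sT\bB\ol\bA$. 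Moreover the identity you then invoke, $\ol\bA^\sT\otimes\ol\bA^\sT=(\bA\otimes\ol\bA)^\sT$, is false as written, since $(\bA\otimes\ol\bA)^\sT=\bA^\sT\otimes\ol\bA^\sT$. Your conclusion nevertheless survives, because $(\ol\bA\otimes\bA)^\sT$ is similar to $\ol\bA\otimes\bA$, which is conjugate to $\bA\otimes\ol\bA$ by the commutation (perfect shuffle) matrix; but the chain of equalities should be repaired. One further caution: in the proof of the main theorem the lemma is used not only through characteristic polynomials but through the explicit matching of eigenvectors of the Kronecker product with symmetric or Hermitian matrices, so your remark that only the characteristic polynomial will matter undersells what is needed downstream; your dual-basis/adjoint alternative, which yields a literal representation rather than one up to similarity, is the version worth keeping.
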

\begin{proof}
We first prove the assertion \eqref{lemma:Kronecker2} since the proof of the first one is essentially the same.
Choose the standard $\bC$-basis $\{\be_{ij}\}$ of $\Mat_n(\bC)$, where $\be_{ij}$ denotes the $n\times n$ complex matrix whose $(i,j)$-entry is $1$, and $0$ elsewhere.
We also adopt the standard vectorization
\[
\vect \colon \Mat_n(\bC) \xrightarrow{\ \ \sim\ \ } \bC^{n^2}
\]
of $\Mat_n(\bC)$, which converts $n\times n$ matrices into column vectors so that
\begin{equation}
\label{eq:basis}
\{ \vect(\be_{11}), \vect(\be_{21}), \dots, \vect(\be_{n1}), \vect(\be_{12}), \dots, \vect(\be_{n2}), \dots, \vect(\be_{1n}), \dots, \vect(\be_{nn}) \}
\end{equation}
forms the standard $\bC$-basis of $\bC^{n^2}$.
Write $\bA = (a_{ij})_{n\times n}$ with $a_{ij}\in \bC$.
Then we have
\[
\ol\bA^\sT \cdot \be_{ij} = \ol a_{i1} \be_{1j} + \ol a_{i2} \be_{2j} + \cdots + \ol a_{in} \be_{nj}.
\]
Hence under the basis \eqref{eq:basis}, it is easy to verify that the left multiplication by $\ol\bA^\sT$ on the $\bC$-vector space $\Mat_n(\bC) \isom \bC^{n^2}$ is represented by the block diagonal matrix $\ol\bA \oplus \ol\bA \oplus \cdots \oplus \ol\bA = \bI_n \otimes \ol\bA$.
Similarly, since $\be_{ij} \cdot \bA = a_{j1} \be_{i1} + a_{j2} \be_{i2} + \cdots + a_{jn} \be_{in}$, one can check that under the basis \eqref{eq:basis}, the right multiplication by $\bA$ is represented by $\bA \otimes \bI_n$.
Therefore, our linear map $f_\bA$ is represented by the matrix product $(\bI_n \otimes \ol\bA) \cdot (\bA \otimes \bI_n) = \bA \otimes \ol\bA$.
Thus the assertion \eqref{lemma:Kronecker2} follows.

For the last assertion, we just need to combine the assertion \eqref{lemma:Kronecker2} with the following general fact: if $\bM \in \Mat_n(\bC)$, then the associated $2n\times 2n$ real matrix
\[
\begin{pmatrix}
\re \bM & -\im\bM \\
\im \bM & \re \bM
\end{pmatrix}
\]
is similar to the block diagonal matrix $\bM \oplus \ol\bM$.
Indeed, one can easily verify that
\[
\begin{pmatrix}
\bI_n & -i \, \bI_n \\
-i \, \bI_n & \bI_n
\end{pmatrix}^{-1}
\cdot
\begin{pmatrix}
\re \bM & -\im\bM \\
\im \bM & \re \bM
\end{pmatrix}
\cdot
\begin{pmatrix}
\bI_n & -i \, \bI_n \\
-i \, \bI_n & \bI_n
\end{pmatrix}
=
\begin{pmatrix}
\bM & \mathbf{0} \\
\mathbf{0} & \ol\bM
\end{pmatrix}.
\]
Applying the above fact to the complex matrix $\bA\otimes \ol\bA$ coming from the assertion \eqref{lemma:Kronecker2}, one gets the assertion \eqref{lemma:Kronecker3} and hence \cref{lemma:Kronecker} follows.
\end{proof}

\subsection{Several standard reductions towards the proof}
\label{subsec-reductions}

Before proving our main \cref{thmA}, we start with some standard reductions.
The lemma below reduces the general case to the splitting product case.

\begin{lemma}
\label{lemma:reduction-product}
In order to prove \cref{thmA}, it suffices to consider the following case:
\begin{itemize}
\item the abelian variety $X = A_1^{n_1} \times \cdots \times A_s^{n_s}$, where the $A_j$ are mutually non-isogenous simple abelian varieties, and
\item the surjective self-morphism $f$ of $X$ is a surjective endomorphism $\alpha$ which can be written as $\alpha_1 \times \cdots \times \alpha_s$ with $\alpha_j \in \End(A_j^{n_j})$.
\end{itemize}
\end{lemma}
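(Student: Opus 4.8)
I would prove this reduction in two independent steps: first reduce to an endomorphism of an abelian variety, then reduce to a product decomposition along isogeny classes.

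\textbf{Step 1: From a surjective self-morphism to an endomorphism.} Let $f \colon X \to X$ be a surjective self-morphism of an abelian variety. After composing with a translation we may assume $f(0) = 0$; by the rigidity lemma for abelian varieties, a morphism fixing the identity is automatically a group homomorphism, so $f$ becomes an endomorphism $\alpha \in \End(X)$. One must check that this translation does not change either dynamical degree: translations act trivially on $\NS^0(X)$ (they are algebraically equivalent to the identity, so $t_x^*$ is the identity on $N^1(X)$), and likewise translations act trivially on $H^i_{\et}(X,\bQ_\ell)$ since a translation is homotopic to the identity at the level of $\ell$-adic cohomology (it lies in the connected group $X(\bk)$ acting trivially on cohomology). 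Hence $\chi_2(f) = \chi_2(\alpha)$ and $\lambda_1(f) = \lambda_1(\alpha)$, and surjectivity of $f$ forces $\alpha$ to be an isogeny, in particular surjective. So it suffices to treat $\alpha \in \End(X)$ surjective.

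\textbf{Step 2: Splitting along isogeny classes.} By the Poincar\'e reducibility theorem, $X$ is isogenous to a product $A_1^{n_1} \times \cdots \times A_s^{n_s}$ with the $A_j$ mutually non-isogenous simple abelian varieties; fix an isogeny $\pi \colon X \to Y \coloneqq A_1^{n_1} \times \cdots \times A_s^{n_s}$. Since $\End^0(Y) = \prod_j \Mat_{n_j}(\End^0(A_j))$ has no nontrivial homomorphisms between distinct factors, the endomorphism $\pi \alpha \pi^{-1} \in \End^0(Y)$ preserves each block, i.e. it equals $\alpha_1 \times \cdots \times \alpha_s$ with $\alpha_j \in \End^0(A_j^{n_j})$; after clearing denominators (replacing $\pi$ by $N\pi$ for a suitable positive integer $N$, which is still an isogeny and does not affect the factorwise structure), we may arrange $\alpha_j \in \End(A_j^{n_j})$, and then $\alpha$ induces an honest self-morphism $\beta = \alpha_1 \times \cdots \times \alpha_s$ of $Y$ with $\pi \alpha = \beta \pi$. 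It remains to see that the dynamical degrees are isogeny invariants. An isogeny $\pi$ is finite surjective, so $\pi^* \colon H^i_{\et}(Y,\bQ_\ell) \to H^i_{\et}(X,\bQ_\ell)$ and $\pi^* \colon N^k(Y)_\bR \to N^k(X)_\bR$ are injective by \cref{lemma:bir-inv} (giving $\chi_i(\alpha) \ge \chi_i(\beta)$ and $\lambda_k(\alpha) \ge \lambda_k(\beta)$); conversely there is an isogeny $\pi' \colon Y \to X$ with $\pi' \pi = (\deg\pi)_X$, so running \cref{lemma:bir-inv} with $\pi'$ and the conjugated morphism, together with the fact that multiplication by $\deg\pi$ acts invertibly on these rational/real vector spaces, yields the reverse inequalities. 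Hence $\chi_2(\alpha) = \chi_2(\beta)$ and $\lambda_1(\alpha) = \lambda_1(\beta)$, and proving \cref{thmA} for $(Y,\beta)$ suffices. Finally $\beta$ is surjective because $\alpha$ is and surjectivity passes across the isogeny; equivalently each $\alpha_j$ is an isogeny of $A_j^{n_j}$.

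\textbf{Main obstacle.} The genuinely delicate point is Step 2: verifying that conjugation by a rational isogeny sends an integral endomorphism to a \emph{block-diagonal integral} one compatibly with an actual morphism of the product, and confirming isogeny-invariance of $\chi_2$ in both directions (the direction $\chi_2(\beta) \ge \chi_2(\alpha)$ needs the two-sided isogeny $\pi'\pi = (\deg\pi)_X$ argument, since $\pi^*$ alone only gives one inequality). Step 1 is essentially the rigidity lemma plus the standard fact that translations act trivially on cohomology and on $\NS$, which are routine. I do not anticipate needing anything beyond the structure theory already recalled in \cref{section-prelim} together with \cref{lemma:bir-inv}.
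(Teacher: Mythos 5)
Your proposal follows essentially the same route as the paper: reduce to an endomorphism via the translation decomposition, then use Poincar\'e reducibility and the block structure of $\End^0(A_1^{n_1}\times\cdots\times A_s^{n_s})$ to split $\alpha$, transferring the dynamical degrees across the isogeny with \cref{lemma:bir-inv} applied in both directions. The paper merely packages the transfer step into a small claim with three elementary reduction moves ($\alpha\mapsto\alpha^m$, $\alpha\mapsto m\alpha$, and $\alpha\mapsto g\circ\alpha\circ h$ with $h\circ g=m_X$), which is exactly the content of your Step 2.

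One bookkeeping slip worth fixing: replacing $\pi$ by $N\pi$ does \emph{not} clear denominators, since $(N\pi)\alpha(N\pi)^{-1}=\pi\alpha\pi^{-1}$ in $\End^0(Y)$. The correct move is to set $\beta=\pi\circ\alpha\circ\pi'$ where $\pi'\colon Y\to X$ satisfies $\pi'\circ\pi=N_X$ with $N=\deg\pi$; this $\beta$ is an honest endomorphism, but it intertwines with $N\alpha$ rather than $\alpha$ (one has $\beta\circ\pi=\pi\circ(N\alpha)$), so the clean statement is $\chi_2(\beta)=N^2\chi_2(\alpha)$ and $\lambda_1(\beta)=N^2\lambda_1(\alpha)$ rather than the equalities you wrote. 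Since both degrees scale by the same factor $N^2$ (as $N_X^*$ acts by $N^i$ on $H^i_{\et}$ and by $N^{2k}$ on $N^k$), the equivalence of \cref{thmA} for $(X,\alpha)$ and $(Y,\beta)$ still holds, so the argument goes through once this is stated correctly; this is precisely what parts (2) and (3) of the paper's \cref{claim:reduction} record.
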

\begin{proof}
We claim that it suffices to consider the case when $f = \alpha$ is a surjective endomorphism.
Indeed, any morphism (i.e., regular map) of abelian varieties is a composite of a homomorphism with a translation (cf.~\cite[Corollary~2.2]{Milne86}).
Hence we can write $f$ as $t_x \circ \alpha$ for a surjective endomorphism $\alpha\in \End(X)$ and $x\in X(\bk)$.
Note however that $t_x\in \Aut^0(X) \isom X$ acts as identity on $H^1_{\et}(X, \bQ_\ell)$ and hence on $H^i_{\et}(X, \bQ_\ell)$ for all $i$.
It follows from the functoriality of the pullback map on $\ell$-adic \'etale cohomology that $\chi_i(f) = \chi_i(\alpha)$.
Similarly, we also get $\lambda_k(f) = \lambda_k(\alpha)$ for all $k$.
So the claim follows, and from now on our $f = \alpha$ is an isogeny.

We then make another claim as follows.
\begin{claim}
\label{claim:reduction}
Towards the proof of \cref{thmA}, we are free to replace our pair $(X, \alpha)$ by any of the following pairs:
\begin{enumerate}
\item \label{claim:reduction1}
$(X, \alpha^m)$, for any positive integer $m$;

\item \label{claim:reduction2}
$(X, m\alpha)$, for any positive integer $m$;

\item \label{claim:reduction3}
$(X', \alpha' \coloneqq g \circ \alpha \circ h)$,
where $g \colon X \to X'$ and $h \colon X' \to X$ are isogenies such that $h\circ g = m_X$ and $g \circ h = m_{X'}$ with $m = \deg g$.
\end{enumerate}
\end{claim}
\begin{proof}[Proof of \cref{claim:reduction}] \renewcommand{\qedsymbol}{}
The first part follows from the functoriality of the pullback map.
For the second one, we note that $m\alpha = m_X \circ \alpha = \alpha \circ m_X$, where $m_X$ is the multiplication by $m$ map.
Using the isomorphism $H^1_{\et}(X, \bZ_\ell) \isom \Hom_{\bZ_\ell}(T_\ell X, \bZ_\ell)$,
one can easily see that the induced pullback map $m_X^*$ on $H^1_{\et}(X, \bQ_\ell)$ is also the multiplication by $m$ map, and hence $m_X^*|_{H^i_{\et}(X, \bQ_\ell)}$ is represented by the diagonal matrix $m^i \cdot \id_{H^i_{\et}(X, \bQ_\ell)}$; see e.g. \cref{thm:etale-coh}.
It follows from the diagram \eqref{eq:num-coh} in the proof of \cref{lemma:dyn-deg-ineq} that the pullback map $m_X^*$ on each $N^k(X)_\bR$ is also represented by the diagonal matrix $m^{2k} \cdot \id_{N^k(X)_\bR}$.
In particular, we have $\chi_i(m\alpha) = m^i \chi_i(\alpha)$ and $\lambda_k(m\alpha) = m^{2k} \lambda_k(\alpha)$, which yields the part \eqref{claim:reduction2}.

For the last part, it is easy to verify that $\alpha' \circ g = g \circ (m\alpha)$ and $h \circ \alpha' = (m\alpha) \circ h$.
By applying \cref{lemma:bir-inv} to the isogenies $g$ and $h$, we have $\chi_i(\alpha') = \chi_i(m\alpha)$ and $\lambda_k(\alpha') = \lambda_k(m\alpha)$.
Then combining with the second part, the third one follows.
So we have proved \cref{claim:reduction}.
\end{proof}

Let us go back to the proof of \cref{lemma:reduction-product}.
By Poincar\'e's complete reducibility theorem (cf.~\cite[\S19, Theorem~1]{Mumford}), we know that $X$ is isogenous to the product $A_1^{n_1} \times \cdots \times A_s^{n_s}$, where the $A_j$ are mutually non-isogenous simple abelian varieties.
Then
\[
\End^0(X) \isom \bigoplus_{j=1}^s \End^0(A_j^{n_j}),
\]
so that we can write $\alpha$ as $\alpha_1 \times \cdots \times \alpha_s$ with $\alpha_j \in \End^0(A_j^{n_j})$.
Using the reductions \eqref{claim:reduction2} and \eqref{claim:reduction3} in \cref{claim:reduction}, we only need to consider the case when $X$ itself is the product variety and each $\alpha_j$ belongs to $\End(A_j^{n_j})$, as stated in the lemma.
\end{proof}

\begin{remark}
\label{rmk:reduction-simple}
We are keen to further reduce the situation of \cref{lemma:reduction-product} to the case when $X = A^n$ is a power of some simple abelian variety $A$, as Esnault and Srinivas did in the proof of \cite[Proposition~6.2]{ES13}.
However, to the best of our knowledge, it does not seem to be straightforward.
More precisely, let $X$ and $\alpha$ be as in \cref{lemma:reduction-product}.
Suppose that \cref{thmA} holds for every $A_j^{n_j}$ and surjective endomorphism $\alpha_j \in \End(A_j^{n_j})$, i.e., $\lambda_1(\alpha_j) = \chi_2(\alpha_j)$ for all $j$.
We wish to show that \cref{thmA} also holds for $X$ and $\alpha$.
Note that
\[
\NS(X) \isom \bigoplus_{j=1}^s \NS(A_j^{n_j}).\footnote{In general, one has $\NS(X \times_\bk Y) \isom \NS(X) \oplus \NS(Y) \oplus \Hom_\bk(\Alb(X), \Pic^0(Y))$; see e.g. \cite[the proof of Theorem~3]{Tate66}. See also \cite[\S3.2]{BC16-Crelle} and references therein for more details about the divisorial correspondences.}
\]
It follows that
\begin{equation}
\label{eq:reduction-simple-1}
\lambda_1(\alpha) = \max_j \{ \lambda_1(\alpha_j)\} = \max_j \{ \chi_2(\alpha_j)\}.
\end{equation}
On the other hand, by the K\"unneth formula, we have
\begin{equation*}
\begin{gathered}
H^1_{\et}(X, \bQ_\ell) \isom \bigoplus_j H^1_{\et}(A_j^{n_j}, \bQ_\ell), \text{ and } \\    
H^2_{\et}(X, \bQ_\ell) \isom \bigoplus_j H^2_{\et}(A_j^{n_j}, \bQ_\ell) \bigoplus \bigoplus_{j < k} \big(H^1_{\et}(A_j^{n_j}, \bQ_\ell) \otimes H^1_{\et}(A_k^{n_k}, \bQ_\ell) \big).
\end{gathered}
\end{equation*}
However, we are not able to deduce that $\chi_2(\alpha) = \max_j \{ \chi_2(\alpha_j)\}$ due to the appearance of the tensor product of the $H^1_{\et}$.

For the sake of completeness, let us explain this obstruction in a more precise way.
We denote by $P_{\alpha_j}(t) \in \bZ[t]$ the characteristic polynomial of $\alpha_j$ (or equivalently $T_\ell \alpha_j$, by \cref{thm:char-poly}).
Set $g_j = \dim A_j^{n_j}$.
Denote all complex roots of $P_{\alpha_j}(t)$ by $\omega_{j,1}, \ldots, \omega_{j,2g_j}$.
Without loss of generality, we may assume that
\begin{equation}
\label{eq:reduction-simple-2}
|\omega_{j,1}| \ge \cdots \ge |\omega_{j,2g_j}| \text{ for all $1\le j \le s$, and }
|\omega_{1,1}| \ge \cdots \ge |\omega_{s,1}|.
\end{equation}
It follows from \cref{thm:etale-coh} that $\chi_2(\alpha_j) = |\omega_{j,1}| \cdot |\omega_{j,2}|$ for all $j$.
Suppose that
\begin{equation}
\label{eq:reduction-simple-3}
\max_j \{ \chi_2(\alpha_j)\} = \chi_2(\alpha_{j_0}) = |\omega_{j_0,1}| \cdot |\omega_{j_0,2}| \text{ for some $j_0$}.
\end{equation}
Note that $j_0$ may not be $1$.
If $|\omega_{2,1}| \le |\omega_{1,2}|$ (in particular, $j_0$ is $1$), then
\[
\chi_2(\alpha) = |\omega_{1,1}| \cdot |\omega_{1,2}| = \chi_2(\alpha_1) = \max_j \{ \chi_2(\alpha_j)\} = \lambda_1(\alpha).
\]
So we are done in this case.
However, if $|\omega_{2,1}| > |\omega_{1,2}|$, then
\[
\chi_2(\alpha) = |\omega_{1,1}| \cdot |\omega_{2,1}| \ge |\omega_{j_0,1}| \cdot |\omega_{j_0,2}| = \chi_2(\alpha_{j_0}) = \max_j \{ \chi_2(\alpha_j)\} = \lambda_1(\alpha).
\]
There is no obvious reason to exclude the worst case $j_0=1$ which yields that
\[
\chi_2(\alpha) = |\omega_{1,1}| \cdot |\omega_{2,1}| > |\omega_{1,1}| \cdot |\omega_{1,2}| = \chi_2(\alpha_1) = \max_j \{ \chi_2(\alpha_j)\} = \lambda_1(\alpha).
\]
\end{remark}

To proceed, we observe that over complex number field $\bC$, the above pathology does not happen because each eigenvalue $\omega_{j,2}$ turns out to be the complex conjugate of $\omega_{j,1}$. This fact follows from the Hodge decomposition $H^1(X, \bC) = H^{1,0}(X) \oplus \ol{H^{1,0}(X)}$, which does not seem to exist in \'etale cohomology as far as we know.
But we still believe that $\omega_{j,2} = \ol{\omega_{j,1}}$ for all $j$.
(As a consequence of our main theorem, we will see that this is actually true; see \cref{final-remark}.)
The following lemma makes use of this observation to reduce the splitting product case as in \cref{lemma:reduction-product} to the case when $X = A^n$ for some simple abelian variety $A$.

\begin{lemma}
\label{lemma:reduction-simple}
In order to prove \cref{thmA}, it suffices to show that if $A^n$ is a power of a simple abelian variety $A$ and $\alpha \in \End(A^n)$ is a surjective endomorphism of $A^n$, then $\lambda_1(\alpha) = |\omega_1|^2$, where $\omega_1$ is one of the complex roots of the characteristic polynomial $P_\alpha(t)$ of $\alpha$ with the maximal absolute value.
\end{lemma}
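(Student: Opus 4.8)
The plan is to reduce the splitting product situation furnished by \cref{lemma:reduction-product} to the assumed power statement. So I would write $X = A_1^{n_1} \times \cdots \times A_s^{n_s}$ with the $A_j$ mutually non-isogenous simple abelian varieties and $\alpha = \alpha_1 \times \cdots \times \alpha_s$, where each $\alpha_j \in \End(A_j^{n_j})$ is a surjective endomorphism, hence an isogeny since $\alpha = \prod_j \alpha_j$ has finite kernel. As in \eqref{eq:reduction-simple-2}, let $\omega_{j,1}, \ldots, \omega_{j,2g_j}$ denote the complex roots of $P_{\alpha_j}(t)$ ordered by decreasing absolute value; note $\omega_{j,1} \ne 0$ since $\alpha_j$ is an isogeny. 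The statement to be granted supplies $\lambda_1(\alpha_j) = |\omega_{j,1}|^2$ for every $j$, and the goal is to deduce $\chi_2(\alpha) = \lambda_1(\alpha)$, which by \cref{lemma:reduction-product} yields \cref{thmA} in general.

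The first step is to upgrade the hypothesis to $\chi_2(\alpha_j) = \lambda_1(\alpha_j) = |\omega_{j,1}|^2$ for each $j$. By \cref{thm:etale-coh}(a) together with \cref{thm:char-poly}, the eigenvalues of $\alpha_j^*$ on $H^1_{\et}(A_j^{n_j}, \bQ_\ell)$ are exactly $\omega_{j,1}, \ldots, \omega_{j,2g_j}$, so $\chi_1(\alpha_j) = |\omega_{j,1}|$; and by \cref{thm:etale-coh}(b) the action of $\alpha_j^*$ on $H^2_{\et}(A_j^{n_j}, \bQ_\ell) \isom \bigwedge^2 H^1_{\et}(A_j^{n_j}, \bQ_\ell)$ has eigenvalues $\omega_{j,a}\omega_{j,b}$ with $a < b$, so $\chi_2(\alpha_j) = |\omega_{j,1}|\,|\omega_{j,2}|$. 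Feeding $\lambda_1(\alpha_j) = |\omega_{j,1}|^2$ into the inequality $\lambda_1(\alpha_j) \le \chi_2(\alpha_j)$ of \cref{lemma:dyn-deg-ineq} and cancelling $|\omega_{j,1}| \ne 0$ gives $|\omega_{j,1}| \le |\omega_{j,2}|$, hence $|\omega_{j,1}| = |\omega_{j,2}|$ by the ordering; therefore $\chi_2(\alpha_j) = |\omega_{j,1}|^2 = \lambda_1(\alpha_j)$.

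The second step is to pass to the product. On the numerical side, \eqref{eq:reduction-simple-1} already gives $\lambda_1(\alpha) = \max_j \lambda_1(\alpha_j) = \max_j |\omega_{j,1}|^2$. On the cohomological side, the K\"unneth decomposition of $H^2_{\et}(X, \bQ_\ell)$ recorded in \cref{rmk:reduction-simple} is $\alpha^*$-equivariant because $\alpha = \prod_j \alpha_j$ acts blockwise, so, taking spectral radii and using $\rho(T\otimes S) = \rho(T)\rho(S)$ on the mixed summands,
\[
\chi_2(\alpha) = \max\Big\{ \max_j \chi_2(\alpha_j),\ \max_{j<k} \chi_1(\alpha_j)\,\chi_1(\alpha_k) \Big\} = \max\Big\{ \max_j |\omega_{j,1}|^2,\ \max_{j<k} |\omega_{j,1}|\,|\omega_{k,1}| \Big\}.
\]
Since $|\omega_{j,1}|\,|\omega_{k,1}| \le \max\{ |\omega_{j,1}|^2, |\omega_{k,1}|^2 \}$, the mixed terms never exceed $\max_j |\omega_{j,1}|^2$, and we conclude $\chi_2(\alpha) = \max_j |\omega_{j,1}|^2 = \lambda_1(\alpha)$.

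I do not anticipate a genuine obstacle: once the identity $\lambda_1(\alpha_j) = |\omega_{j,1}|^2$ is in hand the argument is pure bookkeeping, the only inequality consumed being \cref{lemma:dyn-deg-ineq}. The one point deserving a line of care is the equivariance of the direct-sum decompositions used above — that $\NS^0(X) = \bigoplus_j \NS^0(A_j^{n_j})$ compatibly with $\alpha^*$ (underlying \eqref{eq:reduction-simple-1}) and that the K\"unneth isomorphism for $H^2_{\et}$ intertwines $\alpha^*$ with $\bigoplus_j \alpha_j^*|_{H^2} \oplus \bigoplus_{j<k}\bigl(\alpha_j^*|_{H^1}\otimes\alpha_k^*|_{H^1}\bigr)$ — both immediate from $\alpha$ being a product endomorphism (after choosing a product polarization). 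Conceptually, the lemma simply records that the ``worst case $j_0=1$'' flagged in \cref{rmk:reduction-simple} cannot occur: pinning each $\lambda_1(\alpha_j)$ to $|\omega_{j,1}|^2$ self-improves, via \cref{lemma:dyn-deg-ineq}, to $|\omega_{j,1}| = |\omega_{j,2}|$, which collapses the mixed K\"unneth terms.
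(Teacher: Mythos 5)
Your proposal is correct and follows essentially the same route as the paper: feed the hypothesis $\lambda_1(\alpha_j)=|\omega_{j,1}|^2$ into the inequality $\lambda_1(\alpha_j)\le\chi_2(\alpha_j)=|\omega_{j,1}|\,|\omega_{j,2}|$ of \cref{lemma:dyn-deg-ineq} to force $|\omega_{j,1}|=|\omega_{j,2}|$, which collapses the mixed K\"unneth terms and gives $\chi_2(\alpha)=\max_j|\omega_{j,1}|^2=\lambda_1(\alpha)$. The only cosmetic difference is that you bound the mixed terms by $|\omega_{j,1}|\,|\omega_{k,1}|\le\max\{|\omega_{j,1}|^2,|\omega_{k,1}|^2\}$ directly, whereas the paper uses the ordering convention of \cref{rmk:reduction-simple} to identify $j_0=1$; these are the same observation.
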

\begin{proof}
Thanks to \cref{lemma:reduction-product}, let us consider the case when the abelian variety $X = A_1^{n_1} \times \cdots \times A_s^{n_s}$, where the $A_j$ are mutually non-isogenous simple abelian varieties, and $\alpha = \alpha_1 \times \cdots \times \alpha_s$ is a surjective endomorphism of $X$ with $\alpha_j \in \End(A_j^{n_j})$.
We assume that the reader has been familiar with the notation introduced in \cref{rmk:reduction-simple}, in particular, \cref{eq:reduction-simple-1,eq:reduction-simple-2,eq:reduction-simple-3}.
Applying the hypothesis of \cref{lemma:reduction-simple} to each $A_j^{n_j}$ and $\alpha_j$, we have $\lambda_1(\alpha_j) = |\omega_{j,1}|^2$.
It follows from \cref{lemma:dyn-deg-ineq} and \cref{thm:etale-coh} that
$\lambda_1(\alpha_j) \le \chi_2(\alpha_j) = |\omega_{j,1}| \cdot |\omega_{j,2}|$.
Hence $\lambda_1(\alpha_j) = \chi_2(\alpha_j)$ and $|\omega_{j,1}| = |\omega_{j,2}|$ for all $j$ which tells us $j_0=1$.
This yields that
\[
\chi_2(\alpha) = |\omega_{1,1}| \cdot |\omega_{1,2}| = \chi_2(\alpha_1) = \max_j \{ \chi_2(\alpha_j)\} = \max_j \{ \lambda_1(\alpha_j)\} = \lambda_1(\alpha).
\]
The first and second equalities follow again from \cref{thm:etale-coh}, the third one holds because $j_0=1$, \cref{eq:reduction-simple-1} gives the last one.
\end{proof}

\subsection{Proof of Theorem~\ref{thmA}}
\label{subsec-proof}

We are now ready to prove the main theorem.

\begin{proof}[Proof of \cref{thmA}]
By \cref{lemma:reduction-simple}, we can assume that $X = A^n$ for some simple abelian variety $A$ and $\alpha \in \End(X)$ is a surjective endomorphism of $X$.
Let $P_\alpha(t) \in \bZ[t]$ be the characteristic polynomial of $\alpha$ (see \cref{thm:char-poly}).
Set $g = \dim X$.
Denote all complex roots of $P_\alpha(t)$ by $\omega_1, \ldots, \omega_{2g}$.
Without loss of generality, we may assume that
\[
|\omega_1| \ge \cdots \ge |\omega_{2g}|.
\]
We shall prove that
\begin{equation}
\label{eq:key}
\lambda_1(\alpha) = |\omega_1|^2,
\end{equation}
which will conclude the proof of the theorem by \cref{lemma:reduction-simple}.

Under the above assumption, the endomorphism algebra $\End^0(X)$ is the simple $\bQ$-algebra $\Mat_n(D)$ of all $n\times n$ matrices with entries in the division ring $D \coloneqq \End^0(A)$.
Let $K$ denote the center of $D$, and $K_0$ the maximal totally real subfield of $K$.
As usual, we set
\[
d^2 = [D:K], \ e = [K:\bQ] \ \text{ and } \ e_0 = [K_0:\bQ].
\]
Note that by \cref{lemma:NS-End}, the natural pullback action $\alpha^*$ on $\NS^0$ can be extended to an action $\alpha^*$ on the whole endomorphism $\bQ$-algebra $\End^0(X)$ as follows:
\[
\alpha^* \colon \End^0(X) \lra \End^0(X) \quad \text{via} \quad \psi \longmapsto \alpha^{\dagger} \circ \psi \circ \alpha.
\]
On the other hand, by tensoring with $\bR$, we know that
\[
\End(X)_\bR = \End^0(X) \otimes_\bQ \bR \isom \Mat_n(D) \otimes_\bQ \bR \isom \Mat_n(D \otimes_\bQ \bR)
\]
is either a product of $\Mat_r(\bR)$, $\Mat_r(\bC)$ or $\Mat_r(\bH)$ with $\NS(X)_\bR$ being a product of $\sH_r(\bR)$, $\sH_r(\bC)$ or $\sH_r(\bH)$, the corresponding subspace of symmetric/Hermitian matrices (see \cref{thm:NS-matrix-form}).
When there is no risk of confusion, for simplicity, we still denote the induced action $\alpha^* \otimes_\bQ 1_\bR$ by $\alpha^*$.
In particular, we would write $\alpha^*|_{\End(X)_\bR}$ and $\alpha^*|_{\NS(X)_\bR}$ to emphasize the acting spaces.

According to Albert's classification of the endomorphism $\bQ$-algebra $D$ of a simple abelian variety $A$ (cf.~\cite[\S21, Theorem~2]{Mumford}), we have the following four cases.

\begin{case}
\label{case-I}
$D$ is of Type I$(e)$: $d=1$, $e=e_0$ and $D=K=K_0$ is a totally real algebraic number field and the involution (on $D$) is the identity. In this case, 
\[
\End(X)_\bR \isom \bigoplus_{i=1}^{e_0} \Mat_n(\bR) \ \text{ and } \ \NS(X)_\bR \isom \bigoplus_{i=1}^{e_0} \sH_n(\bR).
\]
For our $\alpha \in \End(X)$, let us denote its image $\alpha \otimes_\bZ 1_\bR$ in $\End(X)_\bR$ by the block diagonal matrix
$
\bA_\alpha = \bA_{\alpha,1} \oplus \cdots \oplus \bA_{\alpha,e_0}
$
with each $\bA_{\alpha,i} \in \Mat_n(\bR)$.
Then the Rosati involution $\alpha^\dagger$ of $\alpha$ could be represented by the transpose $\bA_\alpha^\sT = \bA_{\alpha,1}^\sT \oplus \cdots \oplus \bA_{\alpha,e_0}^\sT$ (see \cref{thm:NS-matrix-form}).
Hence we can rewrite the induced action $\alpha^*$ on $\End(X)_\bR$ in the following matrix form:
\[
\bB = \bB_{1} \oplus \cdots \oplus \bB_{e_0} \longmapsto \bA_\alpha^\sT \bB \bA_\alpha = \bA_{\alpha,1}^\sT \bB_1 \bA_{\alpha,1} \oplus \cdots \oplus \bA_{\alpha,e_0}^\sT \bB_{e_0} \bA_{\alpha,e_0}.
\]
Thanks to \cref{lemma:Kronecker}~\eqref{lemma:Kronecker1}, for each $i$, the linear transformation defined by the mapping
\[
\bB_i \in \Mat_n(\bR) \mapsto \bA_{\alpha,i}^\sT \bB_i \bA_{\alpha,i} \in \Mat_n(\bR),
\]
can be represented by the Kronecker product $\bA_{\alpha,i} \otimes \bA_{\alpha,i}$.
Hence the above linear transformation $\alpha^* |_{\End(X)_\bR}$ on the $e_0 n^2$-dimensional $\bR$-vector space $\End(X)_\bR$ is represented by the block diagonal matrix
\[
(\bA_{\alpha,1} \otimes \bA_{\alpha,1}) \oplus \cdots \oplus (\bA_{\alpha,e_0} \otimes \bA_{\alpha,e_0}).
\]

For each $1\le i\le e_0$, denote all eigenvalues of $\bA_{\alpha,i}$ by $\pi_{i,1}, \ldots, \pi_{i,n}$.
It thus follows from the above discussion that all eigenvalues of the linear transformation $\alpha^* |_{\End(X)_\bR}$ are exactly $\pi_{i,j}\pi_{i,k}$ with $1\le j, k\le n$ and $1\le i\le e_0$.
In particular, if $\bv_{i,j}$ and $\bv_{i,k}$ denote eigenvectors of $\bA_{\alpha,i}$ corresponding to $\pi_{i,j}$ and $\pi_{i,k}$, respectively, then
\[
\bv_{i,j} \otimes \bv_{i,k} = \vect(\bv_{i,j}^\sT \otimes \bv_{i,k}) = \vect(\bv_{i,k} \otimes \bv_{i,j}^\sT) = \vect(\bv_{i,k} \cdot \bv_{i,j}^\sT)
\]
is the eigenvector of $\bA_{\alpha,i} \otimes \bA_{\alpha,i}$ corresponding to $\pi_{i,j}\pi_{i,k}$.\footnote{Note that due to multiplicities of eigenvalues, $\bA_{\alpha,i}$ does not necessarily have $n$ distinct eigenvalues.
Thus, $\bv_{i,j}$ and $\bv_{i,k}$ may be the same for different $j$ and $k$.
Also, not all eigenvectors of $\bA_{\alpha,i} \otimes \bA_{\alpha,i}$ have to arise in this way, namely, being the tensor products $\bv_{i,j} \otimes \bv_{i,k}$.
For instance, one could consider a Jordan block $J_{\lambda,2} \in \Mat_2(\bR)$ with the eigenvalue $\lambda$, but $J_{\lambda,2} \otimes J_{\lambda,2} \sim J_{\lambda^2,1} \oplus J_{\lambda^2,3}$.
}
Now, according to \cref{rmk:red-char}, the reduced characteristic polynomial $\chi_{\alpha}^{\reduced}(t)$ of $\alpha$ is independent of the change of the ground field,
and hence equal to the reduced characteristic polynomial $\chi_{\alpha \otimes_\bZ 1_\bR}^{\reduced}(t)$ of $\alpha \otimes_\bZ 1_\bR \in \End(X)_\bR$,
while the latter by \cref{def:red-char} is just the characteristic polynomial $\det(t \, \bI_{e_0n} - \bA_\alpha)$ of $\bA_\alpha$.
Hence, without loss of generality, we may assume that $\omega_1 = \pi_{1,1}$ by \cref{lemma:red-char-II}.

We now have two subcases to consider.
If $\pi_{1,1} \in \bR$ so that $\bv_{1,1}$ is also a real eigenvector,
then $\bv_{1,1} \otimes \bv_{1,1}$ is a real eigenvector of $\alpha^* |_{\End(X)_\bR}$ corresponding to the eigenvalue $\pi_{1,1}^2$.
The associated column vector of this eigenvector is the real symmetric matrix $\bv_{1,1} \otimes \bv_{1,1}^\sT = \bv_{1,1}^\sT \otimes \bv_{1,1}$.
Next, let us assume that $\pi_{1,1} \in \bC \setminus \bR$.
Then $\ol \pi_{1,1}$ is another eigenvalue of $\bA_{\alpha,1}$ with the corresponding eigenvector $\ol\bv_{1,1}$, since $\bA_{\alpha,1}$ is defined over $\bR$.
It follows that $\bv_{1,1} \otimes \ol\bv_{1,1} + \ol\bv_{1,1} \otimes \bv_{1,1}$ is a real eigenvector of $\alpha^* |_{\End(X)_\bR}$ corresponding to the eigenvalue $\pi_{1,1} \ol\pi_{1,1} = |\pi_{1,1}|^2$;
moreover, it is the associated column vector of the real symmetric matrix
\[
\bv_{1,1}^\sT \otimes \ol\bv_{1,1} + \ol\bv_{1,1}^\sT \otimes \bv_{1,1} = \ol\bv_{1,1} \otimes \bv_{1,1}^\sT + \ol\bv_{1,1}^\sT \otimes \bv_{1,1}.
\]
In either case, we have shown that the spectral radii of $\alpha^* |_{\End(X)_\bR}$ and $\alpha^* |_{\NS(X)_\bR}$ coincide, both equal to $|\pi_{1,1}|^2$.
In summary, we have
\[
|\omega_1|^2 = |\pi_{1,1}|^2 = \rho(\alpha^* |_{\End(X)_\bR}) = \rho(\alpha^* |_{\NS(X)_\bR}) = \lambda_1(\alpha).
\]
For the last equality, see \cref{rmk:div-equiv-relation}.
So we conclude the proof of the equality \eqref{eq:key} in this case.
\end{case}

\begin{case}
\label{case-II}
$D$ is of Type II$(e)$: $d=2$, $e=e_0$, $K=K_0$ is a totally real algebraic number field and $D$ is an indefinite quaternion division algebra over $K$. Hence
\[
\End(X)_\bR \isom \bigoplus_{i=1}^{e_0} \Mat_{2n}(\bR) \ \text{ and } \ \NS(X)_\bR \isom \bigoplus_{i=1}^{e_0} \sH_{2n}(\bR).
\]
The rest is exactly the same as \cref{case-I}.
\end{case}

\begin{case}
\label{case-III}
$D$ is of Type III$(e)$: $d=2$, $e=e_0$, $K=K_0$ is a totally real algebraic number field and $D$ is a definite quaternion division algebra over $K$. In this case,
\[
\End(X)_\bR \isom \bigoplus_{i=1}^{e_0} \Mat_n(\bH) \ \text{ and } \ \NS(X)_\bR \isom \bigoplus_{i=1}^{e_0} \sH_n(\bH),
\]
where $\bH = \big(\frac{-1, \, -1}{\bR} \big)$ is the standard quaternion algebra over $\bR$.
Clearly, $\bH$ can be embedded, in a standard way (see e.g. \cite[Example~9.4]{Reiner03}), into $\Mat_2(\bC) \isom \bH \otimes_\bR \bC$.
This induces a natural embedding of $\Mat_n(\bH)$ into $\Mat_{2n}(\bC) \isom \Mat_n(\bH) \otimes_\bR \bC$ as follows (cf.~\cite[\S4]{Lee49}):
\[
\iota \colon \Mat_n(\bH) \longinjmap \Mat_{2n}(\bC) \quad \text{via} \quad \bA = \bA_1 + \bA_2 \, \mathbf{j} \longmapsto \iota (\bA) \coloneqq 
\begin{pmatrix}
\bA_1 & \bA_2 \\
-\overline{\bA}_2 & \overline{\bA}_1
\end{pmatrix}.
\]
In particular, a quaternionic matrix $\bA$ is Hermitian if and only if its image $\iota(\bA)$ is a Hermitian complex matrix.

For brevity, we only consider the case $e_0 = 1$ (to deal with the general case, the only cost is to introduce an index $i$ as we have done in \cref{case-I} since the matrices involved are block diagonal matrices).
Denote the image $\alpha \otimes_\bZ 1_\bR$ of $\alpha$ in $\Mat_{n}(\bH)$ by $\bA_\alpha = \bA_1 + \bA_2 \, \mathbf{j}$ with $\bA_1, \bA_2 \in \Mat_n(\bC)$.
Then the Rosati involution $\alpha^\dagger$ of $\alpha$ could be represented by the quaternionic conjugate transpose $\bA_\alpha^* = \ol\bA_\alpha^\sT$ (see \cref{thm:NS-matrix-form}), whose image under $\iota$ is just the complex conjugate transpose $\iota(\bA_\alpha)^*$ (aka Hermitian transpose) of $\iota(\bA_\alpha)$.
Similar as in \cref{lemma:NS-End}, the action $\alpha^*$ on $\End(X)_\bR \isom \Mat_n(\bH)$ can be extended to $\End(X)_\bC \coloneqq \End(X)_\bR \otimes_\bR \bC \isom \Mat_{2n}(\bC)$.
By abuse of notation, we still denote this induced action by $\alpha^* \colon \Mat_{2n}(\bC) \lra \Mat_{2n}(\bC)$, which maps $\bB$ to $\iota(\bA_\alpha)^* \cdot \bB \cdot \iota(\bA_\alpha)$.
It follows from \cref{lemma:Kronecker}~\eqref{lemma:Kronecker2} that $\alpha^*|_{\Mat_{2n}(\bC)}$ could be represented by the Kronecker product $\iota(\bA_\alpha) \otimes \ol{\iota(\bA_\alpha)}$.

Note that our $\End(X)_\bC \isom \Mat_{2n}(\bC)$ is a central simple $\bC$-algebra.
Then by \cref{def:red-char,rmk:red-char}, the reduced characteristic polynomial $\chi_{\alpha}^{\reduced}(t)$ of $\alpha$ is equal to the characteristic polynomial $\det(t \, \bI_{2n} - \iota(\bA_\alpha))$ of the complex matrix $\iota(\bA_\alpha)$.
Thanks to \cite[Theorem~5]{Lee49}, the $2n$ eigenvalues of $\iota(\bA_\alpha)$ fall into $n$ pairs, each pair consisting of two conjugate complex numbers;
denote them by $\pi_1, \ldots, \pi_{n}, \pi_{n+1} = \overline{\pi}_1, \ldots, \pi_{2n} = \overline{\pi}_{n}$.
In fact, it is easy to verify that if $\pi_i \in \bC$ is an eigenvalue of $\iota(\bA_\alpha)$ so that
\[
\iota(\bA_\alpha) \begin{pmatrix}
\bu_i \\
\bv_i 
\end{pmatrix}
= \pi_i \begin{pmatrix}
\bu_i \\
\bv_i 
\end{pmatrix},
\ \text{ then } \ 
\iota(\bA_\alpha) \begin{pmatrix}
-\ol\bv_i \\
\ol\bu_i 
\end{pmatrix}
= \ol\pi_i \begin{pmatrix}
-\ol\bv_i \\
\ol\bu_i 
\end{pmatrix},
\]
i.e., $\ol\pi_i$ is also an eigenvalue of $\iota(\bA_\alpha)$ corresponding to the eigenvector $(-\ol\bv_i^\sT, \ol\bu_i^\sT)^\sT$.
Therefore, without loss of generality, we may assume that $\omega_1 = \pi_1$ by \cref{lemma:red-char-II}.

Let $(\bu_1^\sT, \bv_1^\sT)^\sT$ denote an eigenvector of $\iota(\bA_\alpha)$ corresponding to the eigenvalue $\pi_1$.
Then $(-\ol\bv_1^\sT, \ol\bu_1^\sT)^\sT$ is an eigenvector of $\iota(\bA_\alpha)$ corresponding to the eigenvalue $\ol\pi_1$.
Since the linear transformation $\alpha^* |_{\End(X)_\bC}$ can be represented by $\iota(\bA_\alpha) \otimes \ol{\iota(\bA_\alpha)}$ (cf.~\cref{lemma:Kronecker}~\eqref{lemma:Kronecker2}),
we see that both $(\bu_1^\sT, \bv_1^\sT)^\sT \otimes (\ol\bu_1^\sT, \ol\bv_1^\sT)^\sT$ and $(-\ol\bv_1^\sT, \ol\bu_1^\sT)^\sT \otimes (-\bv_1^\sT, \bu_1^\sT)^\sT$ are eigenvectors of $\alpha^* |_{\End(X)_\bC}$, corresponding to the same eigenvalue $\pi_1\ol\pi_1$.
Recall that these two eigenvectors are the associated column vectors of the Hermitian complex matrices
\[
\begin{pmatrix}
\ol\bu_1 \\
\ol\bv_1 
\end{pmatrix}
\otimes
(\bu_1^\sT, \bv_1^\sT) =
\begin{pmatrix}
\ol\bu_1 \\
\ol\bv_1 
\end{pmatrix}
\cdot
(\bu_1^\sT, \bv_1^\sT)
\text{ and }
\begin{pmatrix}
-\bv_1 \\
\bu_1 
\end{pmatrix}
\otimes
(-\ol\bv_1^\sT, \ol\bu_1^\sT) =
\begin{pmatrix}
-\bv_1 \\
\bu_1 
\end{pmatrix}
\cdot
(-\ol\bv_1^\sT, \ol\bu_1^\sT),
\]
respectively.
It is then easy to verify that 
\[
\begin{pmatrix}
\ol\bu_1 \\
\ol\bv_1 
\end{pmatrix}
\cdot
(\bu_1^\sT, \bv_1^\sT) +
\begin{pmatrix}
-\bv_1 \\
\bu_1 
\end{pmatrix}
\cdot
(-\ol\bv_1^\sT, \ol\bu_1^\sT) = 
\begin{pmatrix}
\ol\bu_1\bu_1^\sT + \bv_1\ol\bv_1^\sT & \ol\bu_1\bv_1^\sT - \bv_1\ol\bu_1^\sT \\
\ol\bv_1\bu_1^\sT - \bu_1\ol\bv_1^\sT & \ol\bv_1\bv_1^\sT + \bu_1\ol\bu_1^\sT
\end{pmatrix}
\]
is a Hermitian complex matrix lying in the image of $\iota$.
In other words, this sum belongs to $\NS(X)_\bC$.
Hence, similar as in \cref{case-I}, the spectral radii of $\alpha^*|_{\NS(X)_\bC}$ and $\alpha^*|_{\End(X)_\bC}$ coincide, both equal to $|\pi_1|^2$.
Overall, we have
\[
|\omega_1|^2 = |\pi_1|^2 = \rho(\alpha^* |_{\End(X)_\bC}) = \rho(\alpha^* |_{\NS(X)_\bC}) = \rho(\alpha^* |_{\NS(X)_\bR}) = \lambda_1(\alpha).
\]
We thus conclude the proof of the equality \eqref{eq:key} in this case.
\end{case}

\begin{case}
\label{case-IV}
$D$ is of Type IV$(e_0, d)$: $e=2e_0$ and $D$ is a division algebra over the CM-field $K \supsetneq K_0$
(i.e., $K$ is a totally imaginary quadratic extension of a totally real algebraic number field $K_0$). 
Then
\[
\End(X)_\bR \isom \bigoplus_{i=1}^{e_0} \Mat_{dn}(\bC) \ \text{ and } \ \NS(X)_\bR \isom \bigoplus_{i=1}^{e_0} \sH_{dn}(\bC).
\]
For simplicity, we just deal with the case $e_0=1$.
Denote the image of $\alpha$ in $\End(X)_\bR$ by the matrix $\bA_\alpha \in \Mat_{dn}(\bC)$.
Again, the Rosati involution $\alpha^\dagger$ of $\alpha$ could be represented by the complex conjugate transpose $\bA_\alpha^* = \ol\bA_\alpha^\sT$ (see \cref{thm:NS-matrix-form}).
It follows from \cref{lemma:Kronecker}~\eqref{lemma:Kronecker2} that the induced linear map $\alpha^* |_{\Mat_{dn}(\bC)}$ on the $d^2n^2$-dimensional $\bC$-vector space $\Mat_{dn}(\bC)$ is represented by the Kronecker product $\bA_{\alpha} \otimes \overline{\bA}_{\alpha}$;
however, the induced linear map $\alpha^* |_{\End(X)_\bR}$ on the $2d^2n^2$-dimensional $\bR$-vector space $\End(X)_\bR$ is represented by the block diagonal matrix $(\bA_{\alpha} \otimes \overline{\bA}_{\alpha}) \oplus (\ol\bA_{\alpha} \otimes \bA_{\alpha})$ by \cref{lemma:Kronecker}~\eqref{lemma:Kronecker3}, though we do not need this fact later.

Note that the center of our $\bR$-algebra $\End(X)_\bR \isom \Mat_{dn}(\bC)$ is $\bC$.
Then by \cref{def:red-char,rmk:red-char}, the reduced characteristic polynomial $\chi_{\alpha}^{\reduced}(t)$ of $\alpha$ is equal to the product of the characteristic polynomial $\det(t \, \bI_{dn} - \bA_\alpha)$ of $\bA_\alpha$ and its complex conjugate.
We denote all of its complex roots by $\pi_1, \ldots, \pi_{dn}, \overline{\pi}_1, \ldots, \overline{\pi}_{dn}$.
Without loss of generality, we may assume that $\omega_1 = \pi_1$ by \cref{lemma:red-char-II}.
Let $\bv_1$ be a complex eigenvector of $\bA_{\alpha}$ corresponding to the eigenvalue $\pi_1$.
Then $\bv_1 \otimes \ol\bv_1$ is an eigenvector of $\bA_{\alpha} \otimes \ol\bA_{\alpha}$ corresponding to the eigenvalue $\pi_1 \ol\pi_1 = |\pi_1|^2$.
Note that $\bv_1 \otimes \ol\bv_1$ is the associated column vector of the Hermitian complex matrix $\ol\bv_1 \otimes \bv_1^\sT = \bv_1^\sT \otimes \ol\bv_1 \in \NS(X)_\bR$.
Hence, in this last case, we also have
\[
|\omega_1|^2 = |\pi_{1}|^2 = \rho(\alpha^* |_{\Mat_{dn}(\bC)}) = \rho(\alpha^* |_{\NS(X)_\bR}) = \lambda_1(\alpha).
\]
\end{case}

We thus finally complete the proof of \cref{thmA}.
\end{proof}

\begin{remark}
\label{final-remark}
\begin{enumerate}[(1)]
\item It follows from our proof, in particular from the key equality \eqref{eq:key},
as well as Birkhoff's generalization of the Perron--Frobenius theorem,
that either $\omega_2 = \omega_1 \in \bR$ or $\omega_2 = \ol{\omega}_1 \neq \omega_1$.
This is true for any complex torus $X$ because by the Hodge decomposition we have $H^1(X, \bC) = H^{1,0}(X) \oplus \ol{H^{1,0}(X)}$, where $H^{1,0}(X) = H^0(X, \Omega_X^1)$.
A natural question is whether it is true for all $\omega_i$ in general, i.e., either $\omega_{2i} = \omega_{2i-1} \in \bR$ or $\omega_{2i} = \ol{\omega}_{2i-1} \neq \omega_{2i-1}$ for any $2 \le i \le g = \dim X$.

\item If our self-morphism $f$ is not surjective or $\alpha$ is not an isogeny, one can also proceed by replacing $X$ by the image $\alpha(X)$, which is still an abelian variety of dimension less than $\dim X$.
\end{enumerate}
\end{remark}


\phantomsection
\addcontentsline{toc}{section}{Acknowledgments}
\noindent \textbf{Acknowledgments. }
I would like to thank Dragos Ghioca and Zinovy Reichstein for their constant support, Yuri Zarhin and Yishu Zeng for helpful discussions, Tuyen Trung Truong for reading an earlier draft of this article and for his inspiring comments.
Special thanks go to the referees of my another paper \cite{Hu19} since one of their comments motivates this article initially.
Finally, I am grateful to the referee for his/her many helpful and invaluable suggestions which significantly improve the exposition of the paper.




\providecommand{\bysame}{\leavevmode\hbox to3em{\hrulefill}\thinspace}
\providecommand{\MR}{\relax\ifhmode\unskip\space\fi MR }
\providecommand{\MRhref}[2]{%
  \href{http://www.ams.org/mathscinet-getitem?mr=#1}{#2}
}
\providecommand{\href}[2]{#2}

\end{document}